\title{Several stronger forms of transitivity in non-autonomous discrete dynamical systems}
\author{Hongbo Zeng$^a$, \  $^{b,\dag}$}
\title{Several stronger forms of transitivity in non-autonomous discrete dynamical systems}
\theoremstyle{definition}
\providecommand{\keywords}[1]{\textbf{Keywords :} #1}
\theoremstyle{plain}
\newtheorem{definition}{Definition}
\newtheorem{lemma}{Lemma}
\newtheorem{remark}{Remark}
\newtheorem{theorem}{Theorem}
\newtheorem{example}{Example}
\newtheorem{corollary}{Corollary}
\begin{document}
\date{}
\maketitle

\begin{abstract}
In this paper, we introduce and study the notions of $\Delta$-mixing, $\Delta$-transitivity, mildly mixing, strong multi-transitivity and multi-transitivity with respect to a vector in non-autonomous discrete dynamical systems (NDS).
Firstly, we prove that multi-transitivity (strong multi-transitivity, multi-transitivity with respect to a vector, $\Delta$-transitivity, respectively) of NDS is iteration invariants. Then, necessary and sufficient conditions are obtained under which an NDS is strongly multi-transitive ($\Delta$-mixing, $\Delta$-transitive, respectively). Besides, we present some counterexamples to justify that the results related to stronger forms of transitivity which are true for autonomous discrete dynamical systems (ADS) but fail in NDS, which show that there is a significant difference between the theory of ADS and the theory of NDS, and establish a sufficient condition under which the results still hold in NDS. Finally, we give a sufficient condition under which multi-transitivity, weakly mixing, weakly mixing of all orders, thick transitivity, $\Delta$-transitivity and strongly multi-transitive are
equivalent in NDS.

\end{abstract}

\keywords{multi-transitivity; strong multi-transitivity; $\Delta$-mixing; $\Delta$-transitivity; non-autonomous discrete dynamical system.}

\bigskip\renewcommand{\thefootnote}{\fnsymbol{footnote}}
\footnotetext{\hspace*{-5mm}
\renewcommand{\arraystretch}{1}
\begin{tabular}{@{}r@{}p{11cm}@{}}
$^a$&School of Mathematics and Statistics, Changsha University of Science and Technology, Changsha, P.R. China.\\
 $^b$&

\end{tabular}}
\vspace{-2mm}
\section{Introduction}

Dynamical systems theory is an effective mathematical mechanism which describes the time dependence of a point in a geometric space and has remarkable connections with different areas of mathematics such as topology and number theory. It is used to deal with the complexity, instability, or chaos in the real world, such as in meteorology, ecology, celestial mechanics, and other natural sciences.  Topological transitivity (shortly, transitivity) has been an eternal topic in the study of topological dynamical systems, which is a crucial measure of system complexity. The concept of transitivity can be traced back to Birkhoff \cite{q12}. Since then, many studies have been devoted to this topic.  Many scholars have done much work in classifying transitive systems by their recurrence properties. The relationship among variations on the concept of transitivity can be found in \cite{q27}. And the notions of multi-transitivity,  $\Delta$-transitivity and $\Delta$-mixing for autonomous discrete dynamical systems (Abbrev. ADS) were introduced by Moothathu in 2010 \cite{wx2}. He proved  that $\Delta$-transitivity implies weakly mixing, but there exist some strongly mixing systems which are not $\Delta$-transitive. He also showed that for minimal ADS weakly mixing and multi-transitivity are equivalent. In 2012, Kwietniak \cite{q37} have constructed examples showing that in general there is no relation between multi-transitivity and weakly mixing in ADS. Later, Chen \cite{q38}  introduced multi-transitivity with
respect to a vector and proved that multi-transitive system is Li-Yorke chaotic.  In 2019, authors have characterized multi-transitivity for ADS using different forms of shadowing and uniform positive entropy \cite{q39}.

\par
As a natural extension of ADS, non-autonomous discrete dynamical systems (NDS) are an important part of topological dynamical systems. Compared with classical dynamical systems (ADS), NDS can describe various dynamical behaviors more flexibly and conveniently. Indeed, most of the natural phenomena, whose behavior is influenced by external forces, are time dependent external forces. However, many of the methods, concepts and results of ADS are not applicable. Therefore, there is a strong need to study and develop the theory of NDS. The techniques used in this context are, in general, different from those used for ADS and make this discipline of great interest. In such systems the trajectory of a point is given by successive application of different maps. These systems are related to the theory of difference equations, and in general, they provide a more adequate framework for the study of natural phenomena that appear in biology, physics, engineering, etc \cite{wx3}. Meanwhile, the dynamics of NDS has became
an active research area, obtaining results on topological entropy, sensitivity, mixing properties, chaos, and other properties \cite{wx4,wx6,q24}.

The notion of NDS was introduced by Kolyada \cite{a14} in 1996. Since then, the study of complexity of NDS has seen remarkable increasing interest of many researchers, see \cite{q16,q18,q14,q23,wx1,q25} and the references therein. Sharma \cite{q36} related the dynamical behavior of $(X, f_{ 1,\infty} )$ with the dynamical behavior
of the limiting system (and vice versa). Recently, Salman \cite{wx1} introduced and studied the notions of multi-transitivity and thick transitivity for NDS. The authors obtained a sufficient condition under
which in minimal NDS, multi-transitivity,
thick transitivity and weakly mixing of all orders are equivalent.  The interested reader in transitivity  for NDS might consult \cite{wx1,wx5,wx7,q29}.

\par
Motivated by the above results, this paper is devoted to further study the properties of stronger forms of transitivity in NDS. This paper is organized as follows. In Section 2, we will state some preliminaries, definitions and lemmas. The main conclusions will be given in Section 3.  At first, we prove that multi-transitivity of NDS is iteration invariants. Then, we present some equivalent conditions for strongly multi-transitivity of NDS.  Next, we prove that mildly mixing implies multi-transitivity, which answers the open problems 2 in \cite{wx1}. Besides, we give a equivalent condition for $\Delta$-mixing ($\Delta$-transitivity, respectively) of NDS and prove that $\Delta$-transitivity of NDS is preserved under iterations. Meanwhile, we explore the relationship of transitivity between ADS and NDS for periodic NDS and the relationship of transitivity between two semi-conjugate NDS. Moreover, we provide some counterexamples to justify that the results related to transitivity which are true for ADS but not true for NDS, and provide a sufficient condition under which the results still hold in NDS.
   At last, we give several sufficient conditions under which multi-transitivity, weakly mixing, weakly mixing of all orders, thick transitivity, $\Delta$-transitivity and strongly multi-transitive are
equivalent.

\section{Preparations and lemmas}
In this section, we mainly give some different concepts of transitivity for NDS (see,for example,\cite{q18,q14,wx1,q36}) and some lemmas required for remaining sections of the paper.

Assume that $\mathbb{N}=\{1,2,3,...\}$. Let $(X,d)$ be a compact metric
space and $f_n : X \rightarrow X$ be a sequence of continuous functions, where $n\in \mathbb{N}$. An NDS is a pair $(X, f_{ 1,\infty} )$, where $f_{ 1,\infty}= \{f_n\}_{n=1}^\infty$. For any $i,n\in \mathbb{N}$,
define the composition
$f_i^n:= f_{i+(n-1)} \circ \cdot\cdot\cdot \circ f_i$,
and usual $f_i^0=id_X$, where $id_X$ is the identity map on $X$. In particular, if $f_n = f$ for all $n \in \mathbb{N}$, then the pair $(X, f_{ 1,\infty} )$ is
just the ADS $(X, f)$. The orbit of
a point $x$ in $X$ is the set
$orb_{f_{ 1,\infty}}(x):=\{x,f_1^1(x),f_1^2(x),...,f_1^n(x),...\}$,
which can also be described by the difference equations $x_0 = x$ and $x_{n+1} = f_n (x_n )$, $n\in \mathbb{N}$. Given $k \in \mathbb{N}$, $f_{ 1,\infty}^{[k]}:=\{f_{k(n-1)+1}^k\}_{n=1}^\infty$ is called the $k^{th}$-iterate of NDS $(X, f_{ 1,\infty} )$. Denote $f_1^{-n}=(f_1^{n})^{-1}=f_1^{-1}\circ f_2^{-1}\circ\cdots\circ f_n^{-1}$, $B(x,\varepsilon)$ the open ball of radius $\varepsilon>0$ with center $x$ and $int(A)$ the interior of $A$. For $r\in\mathbb{N}$, write $\mathbb{N}^r=\mathbb{N}\times\mathbb{N}\times\cdots\mathbb{N}$ ($r$-copies) and $X^r=X\times X\times\cdots X$ ($r$-copies).
If $\mathcal{C}(X)$ is the collection of continuous self maps on $X$, then for any $f,g\in\mathcal{C}(X)$, $D(f,g)=sup_{x\in X}d(f(x),g(x))$ is a metric on $\mathcal{C}(X)$ known as the $Supremum$ $metric$. A  collection of sequences $\{f_n^{n+k}\}_{k\in\mathbb{N}}$ converges collectively to $\{f^k\}_{k\in\mathbb{N}}$ with respect to the metric $D$ if for any $\varepsilon>0$, there exists $r_0\in\mathbb{N}$ such that $D(f_r^k,f^k) < \varepsilon$ for all $r\geq r_0$ and for every $k\in\mathbb{N}$ \cite{q36}.
We always suppose that $(X, f_{ 1,\infty} )$  is an NDS and that all the maps are continuous from $X$ to $X$ in the following context.

\begin{definition}
An NDS $(X, f_{ 1,\infty} )$  is said to be (topologically) transitive, if for any two non-empty subsets $U,V\subseteq X$, there exists $n\in \mathbb{N}$ such that $f_1^n(U)\cap V\neq\emptyset$.

An NDS $(X, f_{ 1,\infty} )$  is said to be weakly mixing, if for any four non-empty subsets $U_1,U_2,V_1,V_2\subseteq X$, there exists $n\in \mathbb{N}$ such that $f_1^n(U_1)\cap V_1\neq\emptyset$ and $f_1^n(U_2)\cap V_2\neq\emptyset$.

An NDS $(X, f_{ 1,\infty} )$  is said to be weakly mixing of order $n(n\geq2)$, if for any collection of non-empty subsets $U_1,U_2,...,U_n,V_1,V_2,...,V_n\subseteq X$, there exists $m\in \mathbb{N}$ such that $f_1^m(U_i)\cap V_i\neq\emptyset$ for each $i=\{1,2,...,n\}$.

An NDS $(X, f_{ 1,\infty} )$  is said to be mixing, if for any two non-empty subsets $U,V\subseteq X$, there exists $N\in \mathbb{N}$ such that $f_1^n(U)\cap V\neq\emptyset$ for any $n\geq N$.

An NDS $(X, f_{ 1,\infty} )$  is said to be totally transitive, if for any $n\in \mathbb{N}$, $f_{ 1,\infty}^{[n]}$ is transitive.

An NDS $f_{ 1,\infty}$ is called multi-transitive if $f_{ 1,\infty}\times f_{ 1,\infty}^{[2]}\times\cdots\times f_{ 1,\infty}^{[m]}:X^m\rightarrow X^m$ is transitive for any $m\in \mathbb{N}$. Equivalently, if for any $m\in \mathbb{N}$ and for any collection of nonempty open subsets $U_1,U_2,...,U_m;V_1,V_2,...,V_m$ of $X$, there exists $l\in \mathbb{N}$ such that  $f_1^{il}(U_i)\cap V_i\neq\emptyset$ for each $i\in\{1,2,...,m\}$.

\end{definition}

\begin{definition}
 A point $x\in X$ is said to be transitive if the orbit of $x$ is dense in $X$. An NDS $(X, f_{ 1,\infty} )$  is said to be minimal if all the points of $X$ are transitive.
\end{definition}

\begin{definition}
A family $f_{ 1,\infty}$ is said to be commutative (abelian), if each of its member commutes with every other member of the family, that is, $f_i\circ f_j=f_j\circ f_i$ for any $i,j\in \mathbb{N}$.

An NDS $(X, f_{ 1,\infty} )$ is said to be $k$-periodic, if there exists $k\in \mathbb{N}$ such that $f_{n+k}(x)=f_{n}(x)$ for any $x\in X$ and any $n\in \mathbb{N}$.

 An NDS $(X, f_{ 1,\infty} )$  is said to be feeble open if $int(f_i(U))\neq\emptyset$ for any nonempty open set $U\subseteq X$ and any $i\in \mathbb{N}$.

\end{definition}

Next we generalize the concept of multi-transitivity with respect to a vector (strong multi-transitivity, mildly mixing, $\Delta$-transitivity, $\Delta$-mixing, respectively) from ADS to NDS.

\begin{definition}

Let $p\in\mathbb{N}$ and $\textbf{a}$ = $(a_1, a_2, \ldots, a_p)$ be a vector in  $\mathbb{N}^p$.

An NDS $(X, f_{ 1,\infty} )$  is said to be multi-transitive with respect to the vector $\textbf{a}$ if the system $(X^p, f_{1, \infty}^{[\textbf{a}]})$ is transitive, where $f_{1, \infty}^{[\textbf{a}]} = f_{1, \infty}^{[a_1]} \times f_{1, \infty}^{[a_2]} \times \cdots \times f_{1, \infty}^{[a_p]}$;

An NDS $(X, f_{ 1,\infty} )$  is said to be strongly multi-transitive if $(X, f_{ 1,\infty} )$  is multi-transitive with respect to any vector in $\mathbb{N}^n$ and any $n\in\mathbb{N}$.

An NDS $(X, f_{ 1,\infty} )$  is said to be mildly mixing, if for any transitive NDS $(Y, g_{ 1,\infty} )$, the product system $(X\times Y, f_{ 1,\infty}\times g_{ 1,\infty} )$ is transitive.

 An NDS $(X, f_{ 1,\infty} )$  is said to be $\Delta$-transitive, if for any $m\in \mathbb{N}$, there exists a dense $G_\delta$ subset $Y\subset X$ such that for every $x\in Y$, $\{(f_1^{n}(x),f_1^{2n}(x),...,f_1^{mn}(x))\mid n\in \mathbb{N}\}$ is dense in $X^m$.

An NDS $(X, f_{ 1,\infty} )$  is said to be $\Delta$-mixing, if for each $m\in \mathbb{N}$ and infinite subset $A\subset \mathbb{N}$, there exists a dense $G_\delta$ subset $Y\subset X$ such that for every $x\in Y$, $\{(f_1^{n}(x),f_1^{2n}(x),...,f_1^{mn}(x))\mid n\in A\}$ is dense in $X^m$.
\end{definition}

For the convenience of the following context, we denote $$N_{f_{ 1,\infty}}(U,V)=\{n\in \mathbb{N} \mid f_1^n(U)\cap V\neq\emptyset\}$$  for any nonempty open sets $U,V$ of $X$.

\begin{definition}
A set $A\subseteq \mathbb{N}$ is called syndetic if it has bounded gaps, i.e. there exists a positive integer $M$ such that $\{i,i+1,...,i+M\}\cap A\neq\emptyset$ for every $i\in \mathbb{N}$. A set $A\subseteq \mathbb{N}$ is called cofinite if there exists $N\in \mathbb{N}$ such that $A\supseteq[N,\infty)\cap \mathbb{N}$. A set $A\subseteq \mathbb{N}$ is called thick if it contains arbitrarily long runs of positive integers, that is, for any $p\in \mathbb{N}$, there exists some $n\in \mathbb{N}$ such that $\{n,n+1,...,n+p\}\subseteq A$.  A set $A\subseteq \mathbb{N}$ is called thickly syndetic if $\{m\in \mathbb{N}: m+j\in A, \text{for } 0\leq j\leq l\}$ is syndetic for any $l\in \mathbb{N}$.

An NDS $(X, f_{ 1,\infty} )$  is said to be syndetically (thickly, thickly syndetically, respectively) transitive, if for any two non-empty subsets $U,V\subseteq X$, the set $N_{f_{ 1,\infty}}(U,V)$ is syndetical (thick, thickly syndetic, respectively).
\end{definition}

\begin{definition}
Let $(X,f_{ 1,\infty})$ and $(Y,f_{ 1,\infty})$ be two NDS. We say that $f_{1,\infty}$ is topologically semi-conjugate to $g_{1,\infty}$, if there is a continuous and surjective function $h:X\rightarrow Y$ such that $g_n(h(x))=h(f_n(x))$ for any $n\in \mathbb{N}$ and any $x\in X$.
\end{definition}

\begin{definition}
Let $(X,f_{ 1,\infty})$ be an NDS and $n\in\mathbb{N}$. Assume that  $x_0=x,x_1,...,x_n=y\in X$ and
$\delta>0$. If $d(f_{i+1}(x_i),x_{i+1})<\delta$ for any $i\in\{0,1,...,n-1\}$, the sequence $x_0=x,x_1,...,x_n$ is called a $\delta$-chain of $f_{ 1,\infty}$ from $x$ to $y$ with length $n$. A map $f_{ 1,\infty}$ is called chain transitive if for any two points $x,y\in X$ and any $\delta>0$ there exists a $\delta$-chain from $x$ to $y$. A map $f_{ 1,\infty}$ is called chain mixing if for any two points $x,y\in X$ and any $\delta>0$, there is an $N\in\mathbb{N}$ such that for any $n\geq N$ there is a $\delta$-chain from $x$ to $y$ with length $n$.

\end{definition}

\begin{lemma}(\cite[Theorem 4]{wx9})\label{yinli1}
Let $(X,f_{1,\infty})$ be an NDS. If $f_{1,\infty}$ is mildly mixing, then $f_{1,\infty}$ is mixing. Converse is true if we consider the system without isolated points.
\end{lemma}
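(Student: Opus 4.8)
The plan is to establish the two implications separately, in each case reducing the statement to the behaviour of the hitting-time sets $N_{f_{1,\infty}}(\cdot,\cdot)$ and to the elementary product identity
\[
N_{f_{1,\infty}\times g_{1,\infty}}(U_1\times U_2,\,V_1\times V_2)=N_{f_{1,\infty}}(U_1,V_1)\cap N_{g_{1,\infty}}(U_2,V_2),
\]
which holds because the product iterate $f_1^n\times g_1^n$ meets $V_1\times V_2$ precisely when both coordinates return at the same time $n$. Thus transitivity of a product system is exactly the statement that the two coordinate hitting-time sets always intersect, and both directions become assertions about how large $N_{f_{1,\infty}}(U,V)$ must be.

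For the forward direction (mildly mixing implies mixing) I would argue by contraposition. Suppose $f_{1,\infty}$ is not mixing, so there are nonempty open sets $U,V\subseteq X$ and an infinite set $A\subseteq\mathbb{N}$ with $f_1^n(U)\cap V=\emptyset$ for all $n\in A$, i.e. $N_{f_{1,\infty}}(U,V)\subseteq\mathbb{N}\setminus A$. The core of the argument is to manufacture from this single infinite set $A$ an auxiliary transitive NDS $(Y,g_{1,\infty})$ on a compact metric space, together with nonempty open sets $U',V'\subseteq Y$, whose hitting-time set is squeezed into $A$, that is $\emptyset\neq N_{g_{1,\infty}}(U',V')\subseteq A$. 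One natural route is to realize $Y$ as the orbit closure of a suitably chosen point in a countable compact space (or a subshift), so that density of that orbit yields transitivity while the prescribed return structure to a fixed clopen set forces all returns to lie in $A$. Granting such a $Y$, the displayed identity gives $N_{f_{1,\infty}\times g_{1,\infty}}(U\times U',V\times V')\subseteq(\mathbb{N}\setminus A)\cap A=\emptyset$, so $X\times Y$ fails to be transitive, contradicting mild mixing. Hence $f_{1,\infty}$ is mixing.

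For the converse (mixing together with no isolated points implies mildly mixing) I would argue directly. Let $(Y,g_{1,\infty})$ be an arbitrary transitive NDS, which under the standing no-isolated-points hypothesis we take to have no isolated points, and fix nonempty open sets $U_1\times U_2,\,V_1\times V_2$. Mixing of $f_{1,\infty}$ makes $N_{f_{1,\infty}}(U_1,V_1)$ cofinite, say it contains $[N,\infty)\cap\mathbb{N}$, so by the product identity it suffices to show $N_{g_{1,\infty}}(U_2,V_2)$ is infinite. Here the absence of isolated points is essential: the set of points with dense orbit is a dense $G_\delta$ (each $\bigcup_n (g_1^n)^{-1}(V_2)$ is open and dense by transitivity, so Baire applies over a countable base), hence I may pick a transitive point $x\in U_2$; since $Y$ is perfect, discarding finitely many orbit points of $x$ leaves a dense set, which still meets the open set $V_2$, so the orbit of $x$ enters $V_2$ infinitely often and $N_{g_{1,\infty}}(U_2,V_2)$ is infinite. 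It therefore meets the cofinite set $N_{f_{1,\infty}}(U_1,V_1)$, giving a common return time, so $X\times Y$ is transitive; as $Y$ was arbitrary, $f_{1,\infty}$ is mildly mixing.

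The main obstacle is the construction in the forward direction: producing, for an arbitrary infinite set $A\subseteq\mathbb{N}$, a genuinely transitive NDS on a compact metric space whose hitting times between two fixed open sets are confined to $A$, while keeping all the maps continuous. Everything else is bookkeeping with the hitting-time identity, together with the soft fact — exactly where the hypothesis of no isolated points is indispensable — that transitivity on a perfect space forces infinite return-time sets.
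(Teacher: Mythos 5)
The paper itself offers no proof of this lemma---it is imported wholesale from \cite[Theorem 4]{wx9}---so your argument can only be judged on its own merits, and there it has one genuine gap and one hypothesis-shifting issue. The gap is in the forward direction: your whole reduction rests on producing, for an infinite set $A\subseteq\mathbb{N}$, a transitive NDS $(Y,g_{1,\infty})$ and nonempty open sets $U',V'$ with $\emptyset\neq N_{g_{1,\infty}}(U',V')\subseteq A$, and you explicitly leave this construction as ``the main obstacle'' rather than carrying it out; without it there is no proof. In fact the construction is easy in the non-autonomous setting and needs no orbit closures or subshifts: first replace $A$ by an infinite subset with infinite complement (harmless, since $N_{f_{1,\infty}}(U,V)\subseteq\mathbb{N}\setminus A$ persists when $A$ shrinks); then take $Y=\{0,1\}$ discrete, let $\sigma$ be the swap, and choose the maps $g_n$ so that $g_1^n=\sigma$ for $n\in A$ and $g_1^n=\mathrm{id}$ for $n\notin A$ (possible because $g_{n+1}=g_1^{n+1}\circ(g_1^n)^{-1}\in\{\mathrm{id},\sigma\}$). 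All hitting sets between singletons equal $A$ or $\mathbb{N}\setminus A$, hence are infinite, so $(Y,g_{1,\infty})$ is transitive, while $N_{g_{1,\infty}}(\{0\},\{1\})=A$; your product identity then finishes the contrapositive.

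The second issue is in the converse. You quietly transfer the ``no isolated points'' hypothesis from $(X,f_{1,\infty})$ to the arbitrary transitive test system $(Y,g_{1,\infty})$, whereas the lemma as phrased puts it on ``the system'', most naturally $X$. This matters: unlike the autonomous case, a transitive NDS on a space with isolated points can have a \emph{finite} hitting-time set (on $Y=\{0,1\}$ take $g_1=g_2=\sigma$ and $g_n=\mathrm{id}$ for $n\geq3$; then $N_{g_{1,\infty}}(\{0\},\{1\})=\{1\})$, and pairing such a $Y$ with a mixing system whose set $N_{f_{1,\infty}}(U_1,V_1)$ omits $1$ destroys transitivity of the product. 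So your argument---which does correctly show that on a perfect $Y$ every hitting set of a transitive NDS is infinite (via a transitive point and discarding finitely many orbit points), and then intersects it with a cofinite set---only establishes the converse under the reading in which the test systems $Y$ are also assumed to have no isolated points. You should say explicitly that this is the interpretation you adopt (presumably the one intended in \cite{wx9}), because under the literal reading the step ``$N_{g_{1,\infty}}(U_2,V_2)$ is infinite'' is exactly where the proof breaks.
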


\begin{lemma}\label{yinli2}

Let $(X,f_{1,\infty})$ be a non-autonomous discrete system. If
$f_{1,\infty}$ is strongly multi-transitive, then the set $\{l\in\mathbb{N}:f_1^{la_j}(U_j)\cap V_j\neq\emptyset,\forall j=1,2,...,m\}$ for any $m\in\mathbb{N}$, any vector $\bm{a}=(a_1,a_2,...,a_m)\in\mathbb{N}^m$  and any collection of nonempty open subsets $U_1,U_2,...,U_m;V_1,V_2,...,V_m$ of $X$, is infinite.

\end{lemma}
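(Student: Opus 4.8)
The plan is to exploit the full strength of the hypothesis: strong multi-transitivity gives multi-transitivity with respect to \emph{every} vector, not merely $\bm{a}$, and I would use this freedom to rescale $\bm{a}$ so as to push the common return times arbitrarily far out.

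First I would record the basic unwinding of the iterate. Writing $f_{1,\infty}^{[k]}=\{f_{k(n-1)+1}^{k}\}_{n=1}^{\infty}$ and composing its first $l$ terms, each block $f_{k(n-1)+1}^{k}=f_{kn}\circ\cdots\circ f_{k(n-1)+1}$ telescopes, so that the $l$-fold composition $(f_{1,\infty}^{[k]})_1^{l}$ equals $f_1^{kl}$. Consequently, transitivity of the product system $(X^m,f_{1,\infty}^{[\bm{b}]})$ for a vector $\bm{b}=(b_1,\dots,b_m)$ is exactly the assertion that for every choice of nonempty open sets $U_1,\dots,U_m,V_1,\dots,V_m$ there is some $l\in\mathbb{N}$ with $f_1^{l b_j}(U_j)\cap V_j\neq\emptyset$ for all $j$. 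This is the translation I want between the product formulation of multi-transitivity and the set appearing in the statement.

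Next, fix $m$, the vector $\bm{a}=(a_1,\dots,a_m)$, and the open sets $U_1,\dots,U_m,V_1,\dots,V_m$, and denote by $S$ the set in the statement. Given any $N\in\mathbb{N}$, I would apply the hypothesis to the scaled vector $N\bm{a}=(Na_1,\dots,Na_m)\in\mathbb{N}^m$: since $(X,f_{1,\infty})$ is multi-transitive with respect to $N\bm{a}$, there exists $l'\in\mathbb{N}$ with $f_1^{l' N a_j}(U_j)\cap V_j\neq\emptyset$ for every $j$. Setting $l=l' N$ gives an element of $S$ with $l\geq N$, because $l'\geq 1$. Since $N$ was arbitrary, $S$ contains elements larger than any prescribed bound, hence $S$ is infinite.

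I do not expect a serious obstacle here; the only point requiring care is the telescoping identity $(f_{1,\infty}^{[k]})_1^{l}=f_1^{kl}$, which one must verify from the index bookkeeping in the definition of the iterate so that the choice $l=l' N$ lands exactly on the desired composition $f_1^{l a_j}=f_1^{l' N a_j}$. The genuinely useful idea is simply to rescale the vector rather than to try to extract new return times from a single application of transitivity.
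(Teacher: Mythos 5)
Your argument is correct: the telescoping identity $(f_{1,\infty}^{[k]})_1^{l}=f_1^{kl}$ holds by the index bookkeeping you describe, and applying multi-transitivity with respect to the scaled vector $N\bm{a}$ produces an element $l=l'N\geq N$ of the set, so the set is unbounded and hence infinite. The paper itself gives no explicit proof, deferring to \cite[Lemma 3.1]{wx1}, and your rescaling argument is essentially the same unboundedness-of-return-times idea used there, adapted cleanly to strong multi-transitivity.
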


\begin{proof}
It can be proved by similar arguments as given by authors in \cite[Lemma 3.1]{wx1}.
\end{proof}
\section{Main results}

\begin{theorem}
Let $(X,f_{1,\infty})$ be an NDS and $n\in\mathbb{N}$. Then
 $f_{1,\infty}$ is multi-transitive if and only if $f_{1,\infty}^{[n]}$ is multi-transitive.
\end{theorem}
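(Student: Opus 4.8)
The plan is to reduce everything to the ``single witness'' reformulation of multi-transitivity recorded in the definition: $f_{1,\infty}$ is multi-transitive exactly when, for every $m$ and all nonempty open $U_1,\dots,U_m,V_1,\dots,V_m$, there is one $l\in\mathbb{N}$ with $f_1^{il}(U_i)\cap V_i\neq\emptyset$ for every $i$. The whole argument then rests on one elementary composition identity. Writing $g_{1,\infty}=f_{1,\infty}^{[n]}$, the block structure of the iterate gives $g_1^{j}=f_{n(j-1)+1}^n\circ\cdots\circ f_1^n=f_1^{nj}$, and hence the $l$-fold composition of the $i$-th factor of $g$'s product is $g_1^{il}=f_1^{nil}$. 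Thus multi-transitivity of $g_{1,\infty}$ asserts, for each $m$ and each family of sets, an $l'$ with $f_1^{nil'}(U_i)\cap V_i\neq\emptyset$ for $i=1,\dots,m$, whereas multi-transitivity of $f_{1,\infty}$ asks for an $l$ with $f_1^{il}(U_i)\cap V_i\neq\emptyset$. I would state $g_1^{j}=f_1^{nj}$ as a short preliminary computation and then run the two implications off it.

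First I would treat $f_{1,\infty}^{[n]}$ multi-transitive $\Rightarrow$ $f_{1,\infty}$ multi-transitive, which is immediate. Given $m$ and sets $U_i,V_i$, apply multi-transitivity of $g_{1,\infty}$ at the same value $m$ and the same sets to obtain $l'$ with $f_1^{nil'}(U_i)\cap V_i\neq\emptyset$ for every $i$; then $l:=nl'$ satisfies $f_1^{il}(U_i)\cap V_i=f_1^{nil'}(U_i)\cap V_i\neq\emptyset$ for all $i$, which is precisely the witness required for $f_{1,\infty}$.

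For the converse, $f_{1,\infty}$ multi-transitive $\Rightarrow$ $f_{1,\infty}^{[n]}$ multi-transitive, the point is to feed multi-transitivity of $f_{1,\infty}$ the enlarged parameter $mn$ rather than $m$ and to place the test sets at the right coordinates. Given $m$ and sets $U_1,\dots,U_m,V_1,\dots,V_m$ for $g_{1,\infty}$, I would define a family $W_1,\dots,W_{mn},Z_1,\dots,Z_{mn}$ by $W_{ni}=U_i$, $Z_{ni}=V_i$ for $i=1,\dots,m$, and $W_j=Z_j=X$ for every index $j\le mn$ that is not a multiple of $n$. Multi-transitivity of $f_{1,\infty}$ at parameter $mn$ then yields an $l$ with $f_1^{jl}(W_j)\cap Z_j\neq\emptyset$ for all $j\le mn$; reading this off at $j=ni$ gives $f_1^{nil}(U_i)\cap V_i\neq\emptyset$ for $i=1,\dots,m$, that is $g_1^{il}(U_i)\cap V_i\neq\emptyset$, the witness needed for $g_{1,\infty}$.

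No step is genuinely hard: once the identity $g_1^{j}=f_1^{nj}$ is in hand the argument is purely combinatorial. The only place demanding care is the converse, where one must remember to invoke multi-transitivity at $mn$ (not $m$) and to pad the coordinates whose index is not divisible by $n$ with the whole space $X$, so that those constraints become vacuous (any fixed nonempty open set would serve equally well). I would therefore flag that index bookkeeping as the main point to get right and otherwise present the two implications exactly as above.
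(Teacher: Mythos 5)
Your proposal is correct and follows essentially the same route as the paper: the sufficiency direction is the identical substitution $l=nl'$ via $g_1^{j}=f_1^{nj}$, and the necessity direction is the same padding trick of invoking multi-transitivity at parameter $mn$ with the given sets placed at the indices divisible by $n$ (the paper pads the remaining slots with copies of the $U_i$'s rather than with $X$, an immaterial difference). No gaps.
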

\begin{proof}
Necessity. Let $m\in\mathbb{N}$ and $U_1,U_2,\ldots,U_m$, $V_1,V_2,\ldots,V_m$ be non-empty open subsets of $X$. For each $i\in\{1,2,...,n\}$, take
$$U_{i}^{'} = U_1, \ \ U_{n+i}^{'} = U_2,\ \ \ldots,\ \ U_{(m-1)n+i}^{'} = U_m,$$
$$V_{i}^{'} = V_1, \ \ V_{n+i}^{'} = V_2,\ \  \ldots, \ \ V_{(m-1)n+i}^{'} = V_m.$$
Since $f_{1,\infty}$ is multi-transitive, there exists an $l\in\mathbb{N}$ such that
$f_1^{jl}(U_j^{'})\cap V_j^{'}\neq\emptyset$ for each $j\in\{1,2,...,mn\}$. In consequence, $f_1^{jl}(U_j^{'})\cap V_j^{'}\neq\emptyset$ for each $j\in\{n,2n,...,mn\}$, that is, $f_1^{njl}(U_j)\cap V_j\neq\emptyset$ for each $j\in\{1,2,...,m\}$. Therefore, $f_{1,\infty}^{[n]}$ is multi-transitive.

Sufficiency.  Let $m\in\mathbb{N}$ and $U_1,U_2,\ldots,U_m$, $V_1,V_2,\ldots,V_m$ be non-empty open subsets of $X$. Since $f_{1,\infty}^{[n]}$  is multi-transitive, there exists an $l\in\mathbb{N}$ such that $f_1^{njl}(U_j)\cap V_j\neq\emptyset$ for each $j\in\{1,2,...,m\}$. That is, there exists a $k=nl\in\mathbb{N}$ such that $f_1^{kj}(U_j)\cap V_j\neq\emptyset$ for each $j\in\{1,2,...,m\}$. Therefore, $f_{1,\infty}$ is multi-transitive.

\end{proof}

\begin{theorem}\label{strong4}
Let $(X,f_{1,\infty})$ be an NDS, $n\in\mathbb{N}$ and $\bm{a}=(a_1,a_2,...,a_r)\in\mathbb{N}^r$. Then
 the following conditions are equivalent:

 (1) $(X,f_{1,\infty})$ is strongly multi-transitive;

 (2) $(X,f_{1,\infty}^{[n]})$ is strongly multi-transitive;

 (3) $(X^r,f_{1,\infty}^{[\bm{a}]})$ is strongly multi-transitive;

 (4) for each $k\in\mathbb{N}$, $(X^k,f_{1,\infty}\times f_{1,\infty}^{[2]}\times\cdots f_{1,\infty}^{[k]})$ is weakly mixing of all orders.
\end{theorem}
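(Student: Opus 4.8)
The plan is to reduce all four statements to a single combinatorial reformulation of strong multi-transitivity and then to pass between the four systems by bookkeeping on return times. The starting observation is that, after restricting to product boxes (which form a base for the product spaces), unfolding the definitions gives the iterate identities $(f_{1,\infty}^{[k]})_1^{l}=f_1^{kl}$ and $(f_{1,\infty}^{[\bm{a}]})_1^{l}=f_1^{a_1 l}\times\cdots\times f_1^{a_r l}$. Consequently $(X,f_{1,\infty})$ is strongly multi-transitive precisely when: for every $p\in\mathbb{N}$, every vector $\bm{a}=(a_1,\dots,a_p)\in\mathbb{N}^p$ and all nonempty open sets $U_1,\dots,U_p,V_1,\dots,V_p\subseteq X$, there is a common $l\in\mathbb{N}$ with $f_1^{a_q l}(U_q)\cap V_q\neq\emptyset$ for each $q\in\{1,\dots,p\}$. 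I would record this reformulation first, since all four conditions are compared through it.

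Next I would settle the iteration statement $(1)\Leftrightarrow(2)$. For $(1)\Rightarrow(2)$, note that $(f_{1,\infty}^{[n]})_1^{l a_q}=f_1^{n l a_q}$, so a request for $f_{1,\infty}^{[n]}$ with vector $\bm{a}$ is met by applying the reformulation of $(1)$ to the rescaled vector $n\bm{a}=(na_1,\dots,na_p)$. For $(2)\Rightarrow(1)$, strong multi-transitivity of $f_{1,\infty}^{[n]}$ yields an $l$ with $f_1^{n l a_q}(U_q)\cap V_q\neq\emptyset$, and $l':=nl$ is the common index witnessing $(1)$. For $(1)\Leftrightarrow(3)$: in $(1)\Rightarrow(3)$ a request for $(X^r,f_{1,\infty}^{[\bm{a}]})$ with vector $\bm{c}=(c_1,\dots,c_s)$ and boxes unfolds, coordinate by coordinate, into the finite family of requirements $f_1^{(a_i c_t) l}(U_{t,i})\cap V_{t,i}\neq\emptyset$; feeding the single vector $(a_i c_t)_{i,t}\in\mathbb{N}^{rs}$ into $(1)$ produces the common $l$. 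Conversely, for $(3)\Rightarrow(1)$ I would fix one coordinate, say the first, and fill every other coordinate with $X$; this shows the factor $(X,f_{1,\infty}^{[a_1]})$ is strongly multi-transitive, whence $(1)$ follows from the already-proved $(2)\Rightarrow(1)$ applied with $n=a_1$.

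The heart of the argument is $(1)\Leftrightarrow(4)$. Writing $f_{1,\infty}\times f_{1,\infty}^{[2]}\times\cdots\times f_{1,\infty}^{[k]}=f_{1,\infty}^{[\bm{b}_k]}$ with $\bm{b}_k=(1,2,\dots,k)$ and unfolding boxes, weak mixing of order $s$ of this product says exactly that there is a common $m$ with $f_1^{jm}(U_{i,j})\cap V_{i,j}\neq\emptyset$ for all $i\in\{1,\dots,s\}$ and $j\in\{1,\dots,k\}$. This is precisely the reformulation of $(1)$ for the grid vector that lists each of $1,\dots,k$ exactly $s$ times, so $(1)\Rightarrow(4)$ is immediate. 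For $(4)\Rightarrow(1)$ I must realize an arbitrary vector $\bm{a}=(a_1,\dots,a_p)$ inside such a grid: set $k=\max_q a_q$ and $s=\max(p,2)$, and for each $q$ place $U_q,V_q$ in box $i=q$ at coordinate $j=a_q$, filling every other coordinate of every box with $X$. Weak mixing of order $s$ of $(X^k,f_{1,\infty}^{[\bm{b}_k]})$ then returns a single $m$, and reading off the $p$ nontrivial slots gives $f_1^{a_q m}(U_q)\cap V_q\neq\emptyset$ for all $q$, which is $(1)$.

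The routine part is the return-time bookkeeping; the one place demanding care is $(4)\Rightarrow(1)$, where a finite and possibly repetitive vector $\bm{a}$ must be embedded into a rectangular grid without collisions. Using a separate box $i=q$ for each requested index $q$ avoids any clash even when several of the $a_q$ coincide, and padding the unused slots with $X$ keeps all auxiliary constraints vacuously true. I would also check the degenerate case $p=1$ (handled by taking $s=2$) and state explicitly that product boxes form a base, so that the reduction from arbitrary open sets in $X^r$ and $X^k$ to products of open sets in $X$ is legitimate throughout.
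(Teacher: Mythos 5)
Your proof is correct and uses essentially the same ideas as the paper's: unfolding every condition to the existence of a common time $l$ with $f_1^{cl}(U)\cap V\neq\emptyset$ for a finite list of coefficients $c$, then rescaling vectors by $n$, forming product vectors $(a_ic_t)$, embedding an arbitrary vector into the grid $(1,\dots,k)$ repeated, and padding unused coordinates with $X$. The only difference is organizational (you prove $(2)$, $(3)$, $(4)$ each equivalent to $(1)$, while the paper runs the cycle $(1)\Rightarrow(2)\Rightarrow(3)\Rightarrow(4)\Rightarrow(1)$), which changes nothing substantive.
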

\begin{proof}
$(1) \Rightarrow (2)$. Let $l\in\mathbb{N}$ and $\bm{a}'=(a_1',a_2',...,a_l')\in\mathbb{N}^l$. Since $(X,f_{1,\infty})$ is strongly multi-transitive, $(X,f_{1,\infty})$ is multi-transitive with respect to the vector $(na_1',na_2',...,na_l')$, which implies that $(X^l,(f_{1,\infty}^{[n]})^{(\bm{a}')})$ is transitive. Then $(X,f_{1,\infty}^{[n]})$ is  multi-transitive with respect to the vector $\bm{a}'$, and therefore $(X,f_{1,\infty}^{[n]})$ is also strongly multi-transitive.

$(2) \Rightarrow (3)$. Let $l\in\mathbb{N}$ and $\bm{a}'=(a_1',a_2',...,a_l')\in\mathbb{N}^l$. Since $(X,f_{1,\infty}^{[n]})$ is strongly multi-transitive, $(X,f_{1,\infty}^{[n]})$ is multi-transitive with respect to the vector $$(a_1a_1',a_2a_1',...,a_ra_1',a_1a_2',a_2a_2',...,a_ra_2',...,a_1a_l',a_2a_l',...,a_ra_l'),$$
which implies that $(X^{rl},(f_{1,\infty}^{(\bm{a})})^{(\bm{a}')})$ is transitive. Thus, $(X^r,f_{1,\infty}^{\bm{a}})$ is multi-transitive with respect to the vector $\bm{a}'$. Therefore, $(X^r,f_{1,\infty}^{\bm{a}})$ is strongly multi-transitive.

$(3) \Rightarrow (4)$. Let $k,m\in\mathbb{N}$ and $\bm{a}'=(1,1,...,1,2,2,...,2,...,k,k,...,k)\in\mathbb{N}^{km}$. Since $(X^r,f_{1,\infty}^{[\bm{a}]})$ is strongly multi-transitive, $(X^r,f_{1,\infty}^{[\bm{a}]})$ is multi-transitive with respect to the vector $\bm{a}'$, which implies that $$(X^{km},f_{1,\infty}^{a_1}\times f_{1,\infty}^{a_1}\times\cdots f_{1,\infty}^{a_1}\times f_{1,\infty}^{2a_1}\times f_{1,\infty}^{2a_1}\times\cdots\times f_{1,\infty}^{2a_1}\times\cdots\times f_{1,\infty}^{ma_1}\times\cdots\times f_{1,\infty}^{ma_1})$$ is transitive. Thus, $(X^{k},f_{1,\infty}\times f_{1,\infty}^{[2]}\times\cdots f_{1,\infty}^{[k]})$ is weakly mixing of all orders.

$(4) \Rightarrow (1)$. Let $l\in\mathbb{N}$ and $\bm{a}'=(a_1',a_2',...,a_l')\in\mathbb{N}^l$. Without loss of generality, assume that $a_{i}'\leq a_{i+1}'$ for all $i\in\{1,2,...,l-1\}$. Put $g_{1,\infty}=f_{1,\infty}\times f_{1,\infty}^{[2]}\times\cdots f_{1,\infty}^{[a_l']}$. Since $(X^{a_l'},g_{1,\infty})$ is weakly mixing of all orders, then $(X^{la_l'},g_{1,\infty}^{[l]})$ is transitive, which implies that $(X^{l},f_{1,\infty}^{[\bm{a}']})$ is transitive. Thus, $(X,f_{1,\infty})$ is multi-transitive with respect to the vector $\bm{a}'$. Therefore, $(X,f_{1,\infty})$ is strongly multi-transitive.
\end{proof}

\vspace{0.5cm}
\begin{theorem}\label{T4}
Let $(X,f_{1,\infty})$ be an NDS. If $(X,f_{1,\infty})$ is mildly mixing, then for every $\bm{a}$ = $(a_1, a_2, \ldots, a_r)$ $\in$ $\mathbb{N}^r$, the system $(X^{r},f_{1,\infty}^{[\bm{a}]})$ is also mildly mixing. In particular, $(X,f_{1,\infty})$ is strongly multi-transitive.
\end{theorem}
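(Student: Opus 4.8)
The plan is to reduce the mild mixing of $f_{1,\infty}^{[\bm{a}]}$ to two structural facts about mild mixing and then to read off strong multi-transitivity from Lemma \ref{yinli1}. First I would record the elementary closure of mild mixing under finite products: if $(X,f_{1,\infty})$ and $(X',f'_{1,\infty})$ are mildly mixing, then so is $(X\times X',f_{1,\infty}\times f'_{1,\infty})$. This is immediate from the definition, since for any transitive $(Y,g_{1,\infty})$ one has $(f_{1,\infty}\times f'_{1,\infty})\times g_{1,\infty}=f_{1,\infty}\times(f'_{1,\infty}\times g_{1,\infty})$; mild mixing of $f'_{1,\infty}$ makes $f'_{1,\infty}\times g_{1,\infty}$ transitive, and then mild mixing of $f_{1,\infty}$ makes the whole product transitive. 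Iterating, every finite product of mildly mixing systems is mildly mixing.

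The crux is to show that each iterate $f_{1,\infty}^{[k]}$ inherits mild mixing; granting this, $f_{1,\infty}^{[\bm{a}]}=f_{1,\infty}^{[a_1]}\times\cdots\times f_{1,\infty}^{[a_r]}$ is a product of mildly mixing systems and hence mildly mixing by the previous step. To prove the iterate statement I would fix a transitive $(Y,g_{1,\infty})$ and must produce, for all nonempty open $U,V\subseteq X$ and $W,W'\subseteq Y$, a single $n$ with $f_1^{kn}(U)\cap V\neq\emptyset$ and $g_1^{n}(W)\cap W'\neq\emptyset$. The device is to encode $g_{1,\infty}$ into a $\mathbb{Z}_k$-suspension: on the compact space $\hat Y=Y\times\{0,1,\dots,k-1\}$ define the continuous maps $\hat g_i(y,j)=(y,j+1)$ for $j<k-1$ and $\hat g_i(y,k-1)=(g_{\lceil i/k\rceil}(y),0)$. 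One checks $(\hat Y,\hat g_{1,\infty})$ is a genuine NDS and, crucially, that from every starting fibre the orbit applies $g_1,g_2,\dots$ in order at its successive wraps, so that $\hat g_{1,\infty}$ is transitive whenever $g_{1,\infty}$ is, and $\hat g_1^{kn}(y,0)=(g_1^{n}(y),0)$. Applying mild mixing of $f_{1,\infty}$ to the transitive system $(\hat Y,\hat g_{1,\infty})$ with the fibre-$0$ boxes $U\times(W\times\{0\})$ and $V\times(W'\times\{0\})$ then yields an $m$ with $f_1^{m}(U)\cap V\neq\emptyset$ and $\hat g_1^{m}(W\times\{0\})\cap(W'\times\{0\})\neq\emptyset$; the latter forces $m\equiv0\pmod k$, say $m=kn$, and simultaneously delivers $g_1^{n}(W)\cap W'\neq\emptyset$. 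This is exactly transitivity of $f_{1,\infty}^{[k]}\times g_{1,\infty}$, so $f_{1,\infty}^{[k]}$ is mildly mixing.

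I expect the main obstacle to lie entirely in this suspension step, and specifically in verifying that $\hat g_{1,\infty}$ is transitive. Because the maps of an NDS are indexed by global time while the wrap times of orbits in different fibres are shifted, one must confirm that the choice $g_{\lceil i/k\rceil}$ makes each fibre see $g_1,g_2,\dots$ in the correct order, so that transitivity of $g_{1,\infty}$ (which supplies only a single good return $c$ with $g_1^{c}(W_1)\cap W_2\neq\emptyset$) still connects arbitrary boxes $W_1\times\{j_1\}$ and $W_2\times\{j_2\}$, at the time $m=ck-j_1+j_2$. The synchronization that the landing condition $m\equiv0\pmod k$ imposes on the $f$-side is what upgrades mere transitivity of the product into the iterate statement; it is the reason a naive ``spread-out'' copy of $g_{1,\infty}$, which leaves an uncontrolled offset $0\le s<k$ between $m$ and the nearest multiple of $k$, does not suffice.

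Finally, for the concluding assertion I would invoke Lemma \ref{yinli1}: having shown $f_{1,\infty}^{[\bm{a}]}$ is mildly mixing for every $\bm{a}\in\mathbb{N}^r$, it is in particular mixing and hence transitive, so $(X,f_{1,\infty})$ is multi-transitive with respect to every vector, that is, strongly multi-transitive. No extra hypotheses are needed here, since only the forward implication of Lemma \ref{yinli1} is used.
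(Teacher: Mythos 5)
Your proposal is correct and follows essentially the same route as the paper: the same $\{0,1,\dots,k-1\}$-tower over $(Y,g_{1,\infty})$ (your $\hat g_i$ with $g_{\lceil i/k\rceil}$ at the wrap is exactly the paper's $h_{1,\infty}$ built from $k$-fold repetitions of the $l_m$) to force $k\mid m$ and deduce mild mixing of each iterate, followed by the associativity argument showing finite products of mildly mixing systems are mildly mixing, which is precisely the paper's induction on $r$. Your direct open-set verification of transitivity of the suspension is a slightly cleaner variant of the paper's appeal to a transitive point, but the underlying idea is identical.
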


\begin{proof}
We prove this result by mathematical induction on length $r$ of $\bm{a}$.
When $r$ = 1, we will show that for any $k\in\mathbb{N}$, $(X,f_{1,\infty}^{[k]})$ is also mildly mixing.
Fixing a natural number $k$ and let $(Y,g_{1,\infty})$ be a transitive system.
 We need to show that $(X\times Y,f_{1,\infty}^{[k]}\times g_{1,\infty})$ is transitive.
Let $U_1, U_2$ be two non-empty open subsets of $X$ and $V_1, V_2$ be two non-empty open subsets of $Y$.
Let $y$ is a transitive point of $(Y,g_{1,\infty})$.
Let $K=\{0,1,2,...,k-1\}$ with discrete topology and $Z= Y\times K$.
Define $l_{m} \ : \ Z \rightarrow Z$ by
\[l_{m}(x,i)=\begin{cases}
(x,i+1), &  i=0,1, \ldots,k - 2, \\
(g_{m}(x),0), &  i=k+1,
\end{cases}\]
where $m\in\mathbb{N}$.
Define $$h_{1,\infty}=\{\underbrace{l_1,l_1,...,l_1}_{k-fold},\underbrace{l_2,l_2,...,l_2}_{k-fold},...,\underbrace{l_n,l_n,...,l_n}_{k-fold},...\}.$$
It is easy to see that $(Z,h_{1,\infty})$ is a transitive system $(y, 0)$ as a transitive point of $(Z,h_{1,\infty})$. Put $W_1=V_1\times\{0\}$ and $W_2=V_2\times\{0\}$. Since $(X,f_{1,\infty})$ is mildly mixing, $(X\times Z,f_{1,\infty}\times h_{1,\infty})$ is transitive. Then there exists an $n\in\mathbb{N}$ such that
$$f_1^n(U_1)\cap U_2\neq\emptyset \ \text{  and  }\ h_1^n(W_1)\cap W_2\neq\emptyset.$$
By the construction of $h_{1,\infty}$, we have $k\mid n$. Let $m=n/k$. Then we have
$$f_1^{km}(U_1)\cap U_2\neq\emptyset \ \text{  and  }\ g_1^m(V_1)\cap V_2\neq\emptyset,$$
which imply that $(X\times Y,f_{1,\infty}^{[k]}\times g_{1,\infty})$ is transitive. Therefore, $(X,f_{1,\infty}^{[k]})$ is mildly mixing.

For the inductive step, let $r>1$ be an integer, and assume that the result holds for $r-1$. Let $\bm{a}$ = $(a_1, a_2, \ldots, a_r)$ be a vector in $\mathbb{N}^r$ with length $r$. By the induction hypothesis, we have that $(X^{r-1},f_{1,\infty}^{[\bm{a}']})$ is mildly mixing, where $\bm{a}'$ = $(a_1, a_2, \ldots, a_{r-1})$. Let $(Y,g_{1,\infty})$ be a transitive system. Since $(X,f_{1,\infty}^{[a_r]})$ is mildly mixing, $(X\times Y,f_{1,\infty}^{[a_r]}\times g_{1,\infty})$ is transitive. Again, since $(X^{r-1},f_{1,\infty}^{[\bm{a}']})$ is mildly mixing, then $(X^{r-1}\times X\times Y,f_{1,\infty}^{[\bm{a}']})\times f_{1,\infty}^{[a_r]}\times g_{1,\infty})$ is transitive, that is, $(X^{r}\times Y,f_{1,\infty}^{[\bm{a}]})\times g_{1,\infty})$ is transitive, which implies that $(X^{r},f_{1,\infty}^{[\bm{a}]})$ is mildly mixing. Thus, the result holds for $r$, and this completes the proof.

\end{proof}

A straightforward consequence of the previous result is the following corollary.
\begin{corollary}\label{co2}
Let $(X,f_{1,\infty})$ be an NDS. If $(X,f_{1,\infty})$ is mildly mixing, then $(X,f_{1,\infty})$ is multi-transitive.
\end{corollary}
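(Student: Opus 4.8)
The plan is to read this off directly from Theorem \ref{T4}, which already carries the essential content: that theorem asserts that if $(X,f_{1,\infty})$ is mildly mixing then $(X,f_{1,\infty})$ is strongly multi-transitive, i.e. multi-transitive with respect to every vector in $\mathbb{N}^n$ for every $n\in\mathbb{N}$. It therefore suffices to observe that ordinary multi-transitivity is merely a special instance of this stronger property, so that no new dynamical input is needed.

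First I would unwind the two definitions and line them up. Multi-transitivity of $(X,f_{1,\infty})$ requires that for each $m\in\mathbb{N}$ the product map $f_{1,\infty}\times f_{1,\infty}^{[2]}\times\cdots\times f_{1,\infty}^{[m]}$ be transitive on $X^m$. On the other hand, for a vector $\bm{a}=(a_1,\ldots,a_p)\in\mathbb{N}^p$, multi-transitivity with respect to $\bm{a}$ requires that $f_{1,\infty}^{[\bm{a}]}=f_{1,\infty}^{[a_1]}\times\cdots\times f_{1,\infty}^{[a_p]}$ be transitive on $X^p$. The key observation is that for the particular choice $\bm{a}=(1,2,\ldots,m)\in\mathbb{N}^m$ one has $f_{1,\infty}^{[1]}=f_{1,\infty}$, so $f_{1,\infty}^{[\bm{a}]}$ coincides exactly with $f_{1,\infty}\times f_{1,\infty}^{[2]}\times\cdots\times f_{1,\infty}^{[m]}$. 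Hence multi-transitivity with respect to the vector $(1,2,\ldots,m)$ is precisely the transitivity condition defining multi-transitivity at level $m$.

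Assembling these, since $(X,f_{1,\infty})$ is strongly multi-transitive by Theorem \ref{T4}, it is in particular multi-transitive with respect to each vector $(1,2,\ldots,m)$, $m\in\mathbb{N}$, and therefore multi-transitive. There is essentially no obstacle here: the substance was already established in Theorem \ref{T4}, and what remains is only to match the definition of multi-transitivity against this distinguished family of vectors. The sole point to verify is the bookkeeping convention $f_{1,\infty}^{[1]}=f_{1,\infty}$, which is immediate from the definition of the $k$th iterate $f_{1,\infty}^{[k]}=\{f_{k(n-1)+1}^{k}\}_{n=1}^{\infty}$ with $k=1$.
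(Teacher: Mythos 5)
Your proposal is correct and matches the paper's intent exactly: the paper presents this corollary as a "straightforward consequence" of Theorem \ref{T4} with no further argument, and your derivation simply makes explicit the one observation needed, namely that multi-transitivity is the special case of strong multi-transitivity obtained from the vectors $(1,2,\ldots,m)$, using $f_{1,\infty}^{[1]}=f_{1,\infty}$.
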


\begin{remark}
Recently, Salman and Das \cite{wx1} posed the following question:``For ADS, it is proved that if $(X, f)$ is mildly mixing, then it is multi-transitive. Does the similar result hold for NDS?"
The Corollary \ref{co2} gives an affirmative answer to the problem.
\end{remark}

The above Theorem \ref{T4} also show that if $(X,f_{1,\infty})$ is mildly mixing then $f_{1,\infty}^{[n]}$ is also mildly mixing. However, the following example will show that the converse is not true in general. In other words, for any $n\geq2$, there exists an NDS $(X,f_{1,\infty})$ such that
 $f_{1,\infty}^{[n]}$ is mildly mixing but $f_{1,\infty}$ is not mildly mixing.

\begin{example}
Let $(X,f)$ be a mixing ADS, where $X$ is a compact metric space without isolated point.
Take
$$f_{1,\infty}=\{\underbrace{id,id,...,id}_{(n-1)-fold},f,f^{-1},\underbrace{id,id,...,id}_{(n-2)-fold},f^2,f^{-2},...,\underbrace{id,id,...,id}_{(n-2)-fold},f^m,f^{-m},...\}.$$
It is easy to see that $f_1^{kn}=f^k$ for any $k\in\mathbb{N}$. Since $(X,f)$ is mixing, $f_{1,\infty}^{[n]}$ is also mixing, further, by Lemma \ref{yinli1}, $f_{1,\infty}^{[n]}$ is mildly mixing.
Take
$$g_{1,\infty}=\{f,f^{-1},\underbrace{id,id,...,id}_{(n-2)-fold},f^2,f^{-2},\underbrace{id,id,...,id}_{(n-2)-fold},...,f^m,f^{-m},\underbrace{id,id,...,id}_{(n-2)-fold},...\}.$$
With the similar argument, $g_{1,\infty}$ is transitive.
Let $U,V$ be two non-empty open sets of $X$ with $U\cap V\neq\emptyset$. For any $n\in\mathbb{N}$, it is not difficult to verify that either $f_1^n(U)\cap V\neq\emptyset$ or $g_1^n(U)\cap V\neq\emptyset$, which implies that $(X\times Y,f_{1,\infty}\times g_{1,\infty})$ can not be transitive. Therefore, $f_{1,\infty}$ is not mildly mixing.
\end{example}

\begin{theorem}\label{dingli4}
Let $n\in\mathbb{N}$ and $A\subset\mathbb{N}$ be infinite. Then the following statements are equivalent for $(X,f_{1,\infty})$:

(1) if $U_0,U_1,...,U_m\subset X$ are non-empty open sets, there exists an $n\in A$ such that $\bigcap_{i=0}^mf_1^{-in}(U_i)\neq\emptyset$;

(2) there exists a dense $G_\delta$ subset $Y\subset X$ such that for every $x\in Y$, the set $\{(f_1^{n}(x),f_1^{2n}(x),...,f_1^{mn}(x)) \mid n\in A\}$ is dense in $X^m$.

Moreover, if $A=\mathbb{N}$ and $f_{1,\infty}$ is commutative and surjective, then the statements (1) and (2) are equivalent to the following:

(3) there exists an $x\in X$ such that $\{(f_1^{n}(x),f_1^{2n}(x),...,f_1^{mn}(x):n\in \mathbb{N}\}$ is dense in $X^m$.

\end{theorem}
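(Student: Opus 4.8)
The plan is to reformulate both conditions as density statements about a single family of open sets and then invoke the Baire category theorem; the ``Moreover'' part will be reduced to showing that the whole set of $\Delta$-transitive points is invariant under the dynamics. First I would fix $m$, choose a countable base $\mathcal{B}$ for $X^m$ consisting of boxes $B=B_1\times\cdots\times B_m$ with each $B_i$ from a fixed countable base of $X$, and set $O(B)=\bigcup_{n\in A}\bigcap_{i=1}^{m}f_1^{-in}(B_i)$. Each $O(B)$ is open, being a union of finite intersections of preimages of open sets under the continuous maps $f_1^{in}$. The key observation is that $x\in O(B)$ exactly when some $n\in A$ carries $(f_1^{n}(x),\dots,f_1^{mn}(x))$ into $B$, so the set $T:=\{x\in X:\{(f_1^{n}(x),\dots,f_1^{mn}(x)):n\in A\}\text{ is dense in }X^m\}$ equals $\bigcap_{B\in\mathcal{B}}O(B)$ and is therefore $G_\delta$.

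For $(1)\Leftrightarrow(2)$ I would note that statement $(1)$, read for a non-empty open $U_0$ and a box $B$, says precisely that $U_0\cap O(B)\neq\emptyset$; since it suffices to test density against the base $\mathcal{B}$, condition $(1)$ is equivalent to the assertion that every $O(B)$ is dense. As $X$ is a compact metric space it is a Baire space, so this is equivalent to $T=\bigcap_B O(B)$ being a dense $G_\delta$ set, which is $(2)$ with $Y=T$. Conversely, any dense $G_\delta$ set $Y$ witnessing $(2)$ is contained in every $O(B)$, forcing each $O(B)$ to be dense; alternatively one simply picks $x\in Y\cap U_0$ and uses density of its orbit to reach $U_1\times\cdots\times U_m$.

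For the ``Moreover'' part, $(2)\Rightarrow(3)$ is immediate because a dense $G_\delta$ subset of the non-empty space $X$ is non-empty and, since $A=\mathbb{N}$, any of its points witnesses $(3)$. The substantive implication is $(3)\Rightarrow(2)$. Given a single point $x_0$ with dense arithmetic-progression orbit, I would prove that $T$ is dense by establishing that $T$ is forward invariant and contains a dense orbit. The crucial step is the commuting identity: because the family is commutative, $f_1^{in}\circ f_1^{k}$ and $f_1^{k}\circ f_1^{in}$ are compositions of the same finite multiset of pairwise commuting maps and hence are equal, so $f_1^{in}(f_1^{k}(x))=f_1^{k}(f_1^{in}(x))$ for all $i,n,k$. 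Thus the orbit of $f_1^{k}(x)$ is the image of the orbit of $x$ under the product map $(f_1^{k})^{\times m}$ on $X^m$; since each $f_n$ is surjective, $f_1^{k}$ and hence $(f_1^{k})^{\times m}$ is a continuous surjection, and a continuous surjection sends dense sets to dense sets, so $x\in T$ implies $f_1^{k}(x)\in T$. Finally, projecting the dense orbit of $x_0$ onto the first coordinate shows $\{f_1^{n}(x_0):n\in\mathbb{N}\}$ is dense in $X$, and every such point lies in $T$ by invariance; hence $T$ is a dense $G_\delta$ and $(2)$ holds.

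The hard part will be exactly this forward invariance of the $\Delta$-transitive set: in a general NDS one cannot push a shift $f_1^{k}$ past the sampling maps $f_1^{in}$ simultaneously for all $i$, and it is precisely the commutativity hypothesis that makes this possible, with surjectivity then preserving density. Every remaining step is a routine Baire-category argument.
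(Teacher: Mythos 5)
Your proposal is correct and follows essentially the same route as the paper: the equivalence $(1)\Leftrightarrow(2)$ is the identical Baire-category argument with the sets $\bigcup_{n\in A}\bigcap_{i=1}^{m}f_1^{-in}(B_i)$, and the ``Moreover'' part rests on the same key fact that commutativity gives $f_1^{in}\circ f_1^{k}=f_1^{k}\circ f_1^{in}$ so that surjectivity transports density of the diagonal orbit from $x$ to $f_1^{k}(x)$. The only cosmetic difference is that you package this as forward invariance of the set of $\Delta$-transitive points to prove $(3)\Rightarrow(2)$ directly, whereas the paper moves the basepoint into $U_0$ to verify $(3)\Rightarrow(1)$.
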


\begin{proof}
$(1)\Rightarrow(2)$. Suppose that $\{B_k\}_{k=1}^{\infty}$ is a countable base of $X$. Put
$$Y=\bigcap_{(k_1,...,k_m)\in\mathbb{N}^m}\bigcup_{n\in A}\bigcap_{i=1}^mf_1^{-in}(B_{k_i}).$$
By (1), it is easy to check that $\bigcup_{n\in A}\bigcap_{i=1}^mf_1^{-in}(B_{k_i})$ is open and dense in $X$. Thus by the Baire category theorem, $Y$ is a dense $G_\delta$ subset of $X$. From the construction of $Y$, for any $x\in Y$, $\{(f_1^{n}(x),f_1^{2n}(x),...,f_1^{mn}(x))\mid n\in A\}$ is dense in $X^m$.

$(2)\Rightarrow(1)$. Let $U_0,U_1,...,U_m\subset X$ be non-empty open sets. Take $x\in Y\cap U_0$. Then by (2), $\{(f_1^{n}(x),f_1^{2n}(x),...,f_1^{mn}(x):n\in A\}$ is dense in $X^m$. Hence, for $U_0,U_1,...,U_m$, there exists an $n\in A$ such that $(f_1^{n}(x),f_1^{2n}(x),...,f_1^{mn}(x))\in U_1\times U_2\times\cdots \times U_m$, which implies $x\in \bigcap_{i=0}^mf_1^{-in}(U_i)$.

Now suppose that $A=\mathbb{N}$ and $f_{1,\infty}$ is commutative.

$(2)\Rightarrow(3)$ is obvious.

Next we prove $(3)\Rightarrow(1)$. By the assumption, there exists an $x\in X$ such that $\{(f_1^{n}(x),f_1^{2n}(x),...,f_1^{mn}(x):n\in \mathbb{N}\}$ is dense in $X^m$. Let $U_0,U_1,...,U_m\subset X$ be non-empty open sets. Take $k\in \mathbb{N}$ such that $y=f_1^k(x)\in U_0$.  By the commutative of $f_{1,\infty}$, $f_{1,\infty}\times f_{1,\infty}^{[2]}\times\cdots \times f_{1,\infty}^{[m]}$ commutes with
$f_{1,\infty}\times f_{1,\infty}\times\cdots \times f_{1,\infty}$. This and the surjection of $f_{1,\infty}$ imply that the set $\{(f_1^{n}(y),f_1^{2n}(y),...,f_1^{mn}(y):n\in \mathbb{N}\}$ is also dense in $X^m$. Thus, $(f_1^{n}(y),f_1^{2n}(y),...,f_1^{mn}(y))\in U_1\times U_2\times\cdots \times U_m$ for some $n\in \mathbb{N}$. Therefore, $y\in \bigcap_{i=0}^mf_1^{-in}(U_i)$.
The proof is end.
\end{proof}

\begin{theorem}
Let $(X,f_{1,\infty})$ be an NDS and $n\in\mathbb{N}$. Then
 $f_{1,\infty}$ is $\Delta$-transitive if and only if $f_{1,\infty}^{[n]}$ is $\Delta$-transitive.
\end{theorem}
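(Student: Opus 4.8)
The plan is to reduce the whole statement to the open-set characterization supplied by Theorem \ref{dingli4}, applied with $A=\mathbb{N}$. Write $g_{1,\infty}=f_{1,\infty}^{[n]}$. The first step is the elementary but decisive identity $g_1^k=f_1^{nk}$ for every $k\in\mathbb{N}$: since $g_j=f_{n(j-1)+1}^n$, telescoping the composition $g_k\circ\cdots\circ g_1$ recovers $f_1^{nk}$, whence $g_1^{-ik}=f_1^{-nik}$. Applying Theorem \ref{dingli4} to both systems, $\Delta$-transitivity of $f_{1,\infty}$ is equivalent to the assertion that for every $m\in\mathbb{N}$ and all non-empty open $U_0,U_1,\ldots,U_m\subset X$ there is $k\in\mathbb{N}$ with $\bigcap_{i=0}^m f_1^{-ik}(U_i)\neq\emptyset$, while $\Delta$-transitivity of $g_{1,\infty}$ is equivalent to the same statement with $f_1^{-ik}$ replaced by $g_1^{-ik}=f_1^{-ink}$.

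For sufficiency (if $g_{1,\infty}$ is $\Delta$-transitive then so is $f_{1,\infty}$) the argument is immediate: given non-empty open $U_0,\ldots,U_m$, the characterization for $g_{1,\infty}$ yields $k$ with $\bigcap_{i=0}^m f_1^{-ink}(U_i)\neq\emptyset$, and putting $k'=nk$ gives exactly $\bigcap_{i=0}^m f_1^{-ik'}(U_i)\neq\emptyset$, which is the condition for $f_{1,\infty}$. Hence $f_{1,\infty}$ is $\Delta$-transitive.

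For necessity (the more delicate direction) the difficulty is that $\Delta$-transitivity of $f_{1,\infty}$ only guarantees some return time $k$, whereas for $g_{1,\infty}$ I need a return time of the form $nk$. I would resolve this by applying the characterization of $f_{1,\infty}$ at the larger length $mn$ and padding the unused coordinates with the whole space. Precisely, given non-empty open $U_0,\ldots,U_m$, define $W_0,W_1,\ldots,W_{mn}$ by $W_{in}=U_i$ for $i=0,1,\ldots,m$ and $W_j=X$ for every remaining index $j$. Since $X$ is itself non-empty and open, $\Delta$-transitivity of $f_{1,\infty}$ applied at length $mn$ furnishes $k\in\mathbb{N}$ with $\bigcap_{j=0}^{mn} f_1^{-jk}(W_j)\neq\emptyset$. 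Because $f_1^{-jk}(X)=X$, only the indices $j=in$ contribute, so this intersection collapses to $\bigcap_{i=0}^m f_1^{-ink}(U_i)=\bigcap_{i=0}^m g_1^{-ik}(U_i)$, which is therefore non-empty; by the characterization this is exactly $\Delta$-transitivity of $g_{1,\infty}$.

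I expect this padding step, rather than any analytic estimate, to be the crux of the argument: the one point needing care is the verification that filling the non-multiple-of-$n$ slots with $X$ leaves the intersection governed solely by the $U_i$, which is precisely the identity $f_1^{-jk}(X)=X$. The remaining manipulations are routine bookkeeping with the index identity $g_1^k=f_1^{nk}$ and the equivalence of Theorem \ref{dingli4}.
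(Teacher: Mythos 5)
Your proposal is correct and follows essentially the same route as the paper: both directions reduce the statement to the open-set characterization of Theorem \ref{dingli4} via the identity $(f_{1,\infty}^{[n]})_1^k=f_1^{nk}$, with the necessity direction handled by padding the collection $U_0,\dots,U_m$ out to length $mn$ (the paper fills the spare slots with $U_1$, you fill them with $X$ — an immaterial difference). No gaps.
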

\begin{proof}
Necessity. Let $m\in\mathbb{N}$ and $U_0,U_1,U_2,\ldots,U_m$ be non-empty open subsets of $X$. For each $i\in\{0,1,2,...,m\}$ and each $1\leq j\leq nm$ with $j\neq in$, take
$$U_{in}^{'} = U_i, \ \ U_{j}^{'} = U_1.$$
Since $f_{1,\infty}$ is $\Delta$-transitive, by Theorem \ref{dingli4}, there exists an $l\in\mathbb{N}$ such that
$\bigcap_{i=0}^{nm}f_1^{-il}(U_i^{'})\neq\emptyset$, which implies that $\bigcap_{j=0}^mf_1^{-jnl}(U_j^{'})\neq\emptyset$, that is, $\bigcap_{j=0}^mf_1^{-jnl}(U_j)\neq\emptyset$. Therefore, $f_{1,\infty}^{[n]}$ is $\Delta$-transitive by Theorem \ref{dingli4}.

Sufficiency.  Let $m\in\mathbb{N}$ and $U_0,U_1,U_2,\ldots,U_m$ be non-empty open subsets of $X$. Since $f_{1,\infty}^{[n]}$  is $\Delta$-transitive, there exists an $l\in\mathbb{N}$ such that $\bigcap_{j=0}^mf_1^{-jnl}(U_i)\neq\emptyset$. That is, there exists a $k=nl\in\mathbb{N}$ such that $\bigcap_{j=0}^mf_1^{-jk}(U_i)\neq\emptyset$. Therefore, by Theorem \ref{dingli4}, $f_{1,\infty}$ is $\Delta$-transitive.

\end{proof}

\begin{theorem}
Let $(X,f_{1,\infty})$ be a $k$-periodic NDS and $g=f_k\circ f_{k-1}\circ\cdots f_1$. Then the NDS $(X,f_{1,\infty})$ is strongly multi-transitive (multi-transitive with respect to a vector, $\Delta$-transitive, respectively) if and only if the ADS $(X,g)$ is strongly multi-transitive (multi-transitive with respect to a vector, $\Delta$-transitive, respectively).
\end{theorem}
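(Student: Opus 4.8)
The plan is to reduce the entire statement to the iteration-invariance results already proved, by means of a single structural observation. By $k$-periodicity, for every $n\in\mathbb{N}$ the $n$-th coordinate map of the $k$-th iterate is
$$f_{k(n-1)+1}^{k}=f_{kn}\circ f_{kn-1}\circ\cdots\circ f_{k(n-1)+1}=f_k\circ f_{k-1}\circ\cdots\circ f_1=g,$$
since $f_{k(n-1)+i}=f_i$ for $1\le i\le k$. Hence $f_{1,\infty}^{[k]}=\{g,g,g,\ldots\}$ is exactly the autonomous system $(X,g)$, and in particular $f_1^{nk}=g^{n}$ for all $n$. This identity is the engine of the proof: it identifies the $k$-th iterate of the $k$-periodic NDS with the ADS $(X,g)$, so each equivalence in the statement becomes an instance of ``the property is unchanged on passing to the $k$-th iterate.''

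For the strongly multi-transitive clause I would simply invoke Theorem~\ref{strong4}: its equivalence $(1)\Leftrightarrow(2)$, applied with $n=k$, says that $(X,f_{1,\infty})$ is strongly multi-transitive if and only if $(X,f_{1,\infty}^{[k]})$ is. Combining this with the identification $f_{1,\infty}^{[k]}=(X,g)$ gives that $(X,f_{1,\infty})$ is strongly multi-transitive if and only if $(X,g)$ is. The $\Delta$-transitive clause is handled identically, using the preceding theorem on iteration invariance of $\Delta$-transitivity with $n=k$ in place of Theorem~\ref{strong4}: $(X,f_{1,\infty})$ is $\Delta$-transitive iff $f_{1,\infty}^{[k]}$ is iff $(X,g)$ is. Both are two-line arguments once the structural identity is available.

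The multi-transitive-with-respect-to-a-vector clause is the delicate point, and I expect it to be the main obstacle. Unwinding the definitions, $(X^r,f_{1,\infty}^{[\bm{a}]})$ is transitive exactly when for all nonempty open $U_j,V_j$ there is an $l$ with $f_1^{a_j l}(U_j)\cap V_j\neq\emptyset$ for every $j$, whereas transitivity of the corresponding ADS product $g^{a_1}\times\cdots\times g^{a_r}$ requires an $l$ with $f_1^{a_j l k}(U_j)\cap V_j\neq\emptyset$; thus passing to the $k$-th iterate rescales the time parameter by the factor $k$. One direction (ADS $\Rightarrow$ NDS) is immediate, since any witness for the ADS is already a witness for the NDS. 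For the converse the factor $k$ must be absorbed, and the natural route is to recall that being multi-transitive with respect to \emph{every} vector is, by definition, strong multi-transitivity, so the full content of this clause follows from the strongly multi-transitive case already settled; alternatively one runs a coordinate-padding argument exactly as in the iteration-invariance proof for multi-transitivity at the start of this section, promoting the property through the $k$-th iterate. The care needed here is genuine—mere transitivity is not preserved under iteration—so it is precisely the richer multi-transitive structure, rather than bare transitivity, that makes the equivalence go through.
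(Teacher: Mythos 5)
Your structural observation is correct and is the cleanest way to see the result: for a $k$-periodic family, $f_{k(n-1)+1}^{k}=f_k\circ\cdots\circ f_1=g$ for every $n$, so $f_{1,\infty}^{[k]}$ is the constant sequence $\{g,g,\ldots\}$, i.e.\ the ADS $(X,g)$, and $f_1^{nk}=g^{n}$. For the strongly multi-transitive and $\Delta$-transitive clauses, your reduction to the iteration-invariance theorems (Theorem \ref{strong4}, $(1)\Leftrightarrow(2)$ with $n=k$, and the corresponding iteration-invariance statement for $\Delta$-transitivity) is valid and genuinely different from the paper's argument. The paper proceeds directly: the ADS-to-NDS direction rewrites $g^{a_jl}$ as $f_1^{ka_jl}$, and the NDS-to-ADS direction uses a padding trick, invoking multi-transitivity with respect to the auxiliary vector $\bm{a}'=(a_1,2a_1,\ldots,ka_1,\ldots,a_m,2a_m,\ldots,ka_m)$ and reading off the coordinates whose exponent is a multiple of $k$. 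Your route is more modular, reusing results already proved; the paper's is self-contained. Both are correct for these two clauses.

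The gap is in the ``multi-transitive with respect to a vector'' clause, and neither of your two suggested fixes closes it. If the clause is read per fixed vector $\bm{a}$ --- the only reading under which it says anything not already contained in the strong multi-transitivity clause --- then deducing it from the strongly multi-transitive case is a non sequitur: multi-transitivity with respect to the single vector $\bm{a}$ does not yield strong multi-transitivity, so that case is not ``already settled'' under the hypothesis actually available. The padding fallback fails for the same reason: it requires multi-transitivity with respect to the strictly longer vector $\bm{a}'$ above, which is not hypothesized. In fact the per-vector statement is false: take $\bm{a}=(1)$, so the clause asserts that the NDS is transitive iff $(X,g)$ is transitive; choosing $f_n=h$ for all $n$ (a $2$-periodic family) with $h$ minimal but $h^2$ not transitive gives a transitive NDS whose $g=h^2$ is not transitive. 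The only tenable reading therefore quantifies over all vectors, in which case the clause coincides with strong multi-transitivity and your reduction is legitimate but carries no extra content. The paper shares this soft spot: it proves only the strongly multi-transitive case in detail and declares the remaining cases ``similar,'' even though the same padding argument cannot be run from a fixed-vector hypothesis.
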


\begin{proof}
Let $m\in\mathbb{N}$ and $\bm{a}=(a_1,a_2,...,a_m)\in\mathbb{N}^m$. Let $U_1,U_2,...,U_m; V_1,V_2,...,V_m$ be a collection of non-empty open subsets of $X$. Since $(X,g)$ is strongly multi-transitive, there exists an $l\in\mathbb{N}$ such that $g^{a_jl}(U_j)\cap V_j\neq\emptyset$ for every $j\in\{1,2,...,m\}$. By the definition of $g$, we have $f_1^{ka_jl}(U_j)\cap V_j\neq\emptyset$ for every $j\in\{1,2,...,m\}$. Therefore, $(X,f_{1,\infty})$ is strongly multi-transitive.

Conversely, let $m\in\mathbb{N}$ and $\bm{a}=(a_1,a_2,...,a_m)\in\mathbb{N}^m$. And let $U_1,U_2,...,U_m; V_1,V_2,...,V_m$ be a collection of non-empty open subsets of $X$. Put
$$U_{i}^{'} = U_1, U_{k+i}^{'} = U_2, \ldots, U_{(m-1)k+i}^{'} = U_m,$$
$$V_{i}^{'} = V_1, V_{k+i}^{'} = V_2, \ldots, V_{(m-1)k+i}^{'} = V_m,$$
where  $i=1,2,...,k$. Let $$\bm{a}'=(b_1,b_2,...,b_{mk})=(a_1,2a_1,...,ka_1,a_2,2a_2,...ka_2,...,a_m,2a_m,...,ka_m)\in\mathbb{N}^{mk}.$$ Since $(X,f_{1,\infty})$ is strongly multi-transitive, then it is multi-transitive with respect to the vector $\bm{a}'$, therefore $(X^{mk},f_{1,\infty}^{[\bm{a}']})$ is transitive and hence there exists an $s\in\mathbb{N}$ such that $f_1^{b_js}(U_j^{'})\cap V_j^{'}\neq\emptyset$ for every $j\in\{1,2,...,km\}$. By $k$-periodicity of $(X,f_{1,\infty})$, we have $f_1^{ka_js}(U_j)\cap V_j\neq\emptyset$ for every $j\in\{1,2,...,m\}$. Equivalently, $g^{a_js}(U_j)\cap V_j\neq\emptyset$ for every $j\in\{1,2,...,m\}$. Therefore, $(X,g)$ is strongly multi-transitive.

The proofs of other cases are similar, so we omit them.

\end{proof}

\begin{theorem}
Let $(X,f_{1,\infty})$ and $(Y,g_{1,\infty})$ be two NDS such that $f_{1,\infty}$ is topologically semi-conjugate to $g_{1,\infty}$. If $f_{1,\infty}$ is strongly multi-transitive (multi-transitive with respect to a vector, $\Delta$-mixing, $\Delta$-transitive, respectively), then $g_{1,\infty}$ is also strongly multi-transitive (multi-transitive with respect to a vector, $\Delta$-mixing, $\Delta$-transitive, respectively).
\end{theorem}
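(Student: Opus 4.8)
The plan is to reduce every case to two transferable features of a topological semi-conjugacy: that the factor map intertwines the composed maps, and that this intertwining is inherited by iterates and by finite products. The starting point is the identity $h\circ f_1^n=g_1^n\circ h$ for all $n\in\mathbb{N}$, which follows by induction from the defining relation $g_n\circ h=h\circ f_n$. Composing this relation blockwise, one checks in exactly the same way that $h$ is itself a semi-conjugacy from $f_{1,\infty}^{[k]}$ to $g_{1,\infty}^{[k]}$ for every $k$, since the $n$-th map $g_{k(n-1)+1}^k$ of $g_{1,\infty}^{[k]}$ satisfies $g_{k(n-1)+1}^k\circ h=h\circ f_{k(n-1)+1}^k$. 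Consequently, for any vector $\bm{a}=(a_1,\ldots,a_p)\in\mathbb{N}^p$, the product map $H=h\times\cdots\times h:X^p\to Y^p$ is continuous, surjective, and is a semi-conjugacy from $f_{1,\infty}^{[\bm{a}]}$ to $g_{1,\infty}^{[\bm{a}]}$, because it intertwines the coordinatewise maps.

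For multi-transitivity with respect to $\bm{a}$ I would invoke the standard fact that transitivity passes to a factor. Given non-empty open sets $V_1,V_2\subset Y^p$, the sets $H^{-1}(V_1)$ and $H^{-1}(V_2)$ are non-empty by surjectivity and open by continuity, so transitivity of $(X^p,f_{1,\infty}^{[\bm{a}]})$ yields an $n$ for which the $n$-fold composition carries some point $x\in H^{-1}(V_1)$ into $H^{-1}(V_2)$; pushing $x$ forward through $H$ and using the intertwining $H\circ\Phi^n=\Psi^n\circ H$ gives $\Psi^n(H(x))\in V_2$ with $H(x)\in V_1$, hence the required overlap in $Y^p$. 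Thus $(Y^p,g_{1,\infty}^{[\bm{a}]})$ is transitive, i.e. $g_{1,\infty}$ is multi-transitive with respect to $\bm{a}$. Since strong multi-transitivity is precisely multi-transitivity with respect to every vector, this case follows at once.

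For the $\Delta$-transitive and $\Delta$-mixing cases I would avoid the dense $G_\delta$ sets directly and instead use the open-set characterisation of Theorem \ref{dingli4}. Given non-empty open $V_0,\ldots,V_m\subset Y$, set $U_i=h^{-1}(V_i)$, which are non-empty open subsets of $X$. The relevant statement of Theorem \ref{dingli4} for $f_{1,\infty}$ (with $A=\mathbb{N}$ for $\Delta$-transitivity, and with an arbitrary infinite $A$ for $\Delta$-mixing) furnishes an $n$ in the prescribed index set with $\bigcap_{i=0}^m f_1^{-in}(U_i)\neq\emptyset$; choosing $x$ in this intersection, the identity $h\circ f_1^{in}=g_1^{in}\circ h$ shows that $y=h(x)$ lies in $\bigcap_{i=0}^m g_1^{-in}(V_i)$. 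Applying Theorem \ref{dingli4} in the reverse direction to $g_{1,\infty}$ then delivers $\Delta$-transitivity (respectively $\Delta$-mixing) of $g_{1,\infty}$.

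The only place requiring genuine care is the bookkeeping that a semi-conjugacy is inherited by the iterated systems $f_{1,\infty}^{[k]}$ and by the vector-product systems $f_{1,\infty}^{[\bm{a}]}$; once that verification is in place, every property transfers by the same push-forward-a-point argument, so I expect no substantive obstacle beyond keeping the index conventions of $f_{1,\infty}^{[k]}$ and $f_{1,\infty}^{[\bm{a}]}$ straight.
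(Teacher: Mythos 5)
Your proposal is correct and follows essentially the same route as the paper: pull open sets back through $h^{-1}$, apply the hypothesis to $f_{1,\infty}$, and push the resulting intersection forward using $h\circ f_1^n=g_1^n\circ h$. Your explicit treatment of the $\Delta$-transitive and $\Delta$-mixing cases via the open-set characterisation of Theorem~\ref{dingli4} is exactly the ``similar argument'' the paper leaves to the reader.
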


\begin{proof}
Let $m\in\mathbb{N}$ and $\bm{a}=(a_1,a_2,...,a_m)\in\mathbb{N}^m$. Let $U_1,U_2,...,U_m$,$V_1,V_2,...,V_m$ be a collection of non-empty open subsets of $Y$. Since $f_{1,\infty}$ is topologically semi-conjugate to $g_{1,\infty}$, therefore each of $h^{-1}(U_i)$ and $h^{-1}(V_i)$ is non-empty open subset of $X$ for any $i\in\{1,2,...,m\}$. Since $f_{1,\infty}$ is strongly multi-transitive, there exists an $l\in\mathbb{N}$ such that $f_1^{a_jl}(h^{-1}(U_j))\cap h^{-1}(V_j)\neq\emptyset$ for every $j\in\{1,2,...,m\}$. Now, as $h\circ f_k=g_k\circ h$ for any $k\in\mathbb{N}$, one can get that for any $n\in\mathbb{N}$ $h\circ f_1^n=g_1^n\circ h$. Further, we have $f_1^n(h^{-1}(A))\subseteq h^{-1}\circ g_1^n(A)$ for any  $A\subseteq X$ and any $n\in\mathbb{N}$. Hence, for every $j\in\{1,2,...,m\}$,
$$\emptyset\neq f_1^{a_jl}(h^{-1}(U_j))\cap h^{-1}(V_j)\subseteq h^{-1}\circ g_1^{a_jl}(U_j)\cap h^{-1}(V_j)=h^{-1}(g_1^{a_jl}(U_j)\cap V_j).$$
Therefore, $g_1^{a_jl}(U_j)\cap V_j\neq\emptyset$ for every $j\in\{1,2,...,m\}$. Thus, $g_{1,\infty}$ is strongly multi-transitive.

With the similar argument, the result also holds for multi-transitive with respect to a vector ($\Delta$-mixing, $\Delta$-transitive, respectively).
\end{proof}

For ADS it is known that if $f,g$ are both syndetically transitive and weakly mixing, then $f\times g$ is syndetically transitive and weakly mixing. However, the result is not always true for NDS as justified by the next example.

\begin{example}\label{lizi2}

Let $f$  be a mixing homeomorphism map on $X$. Consider the non-autonomous discrete systems $(X,f_{1,\infty})$ and $(X,g_{1,\infty})$, where $f_{1,\infty}$ is defined by
$$f_{1,\infty}=\{f,f^{-1},f^2,f^{-2},...,f^n,f^{-n},...\},$$
and $g_{1,\infty}$ is defined by
$$g_{1,\infty}=\{id,f,f^{-1},f^2,f^{-2},...,f^n,f^{-n},...\}.$$
Since $f$ is mixing and $f_1^{2k-1}=g_1^{2k}=f^k$ for each $k\in\mathbb{N}$, $f_{1,\infty}$ and $g_{1,\infty}$ are both syndetically transitive and weakly mixing.  But note that $f_1^{2k}=f_1^{2k-1}=id$ for any $k\in\mathbb{N}$, so for any $n\in\mathbb{N}$ and any non-empty open subsets of $U,V$ of $X$ with $U\cap V\neq\emptyset$, either $f_1^n(U)\cap V\neq\emptyset$ or $g_1^n(U)\cap V\neq\emptyset$, which implies that $f_{1,\infty}\times g_{1,\infty}$ is neither syndetically transitive nor weakly mixing.

\end{example}

\begin{remark}
For ADS it is known that if $(X,f)$ is syndetically transitive and weakly mixing, then it is  thickly syndetically transitive. However, the above example shows that the result is not always true for NDS even though $f_{1,\infty}$ is syndetically transitive and weakly mixing of all orders.
\end{remark}

For ADS it is known that weakly mixing is equivalent to the thick transitivity. Therefore, syndetical transitivity and thick transitivity imply thickly syndetical transitivity in ADS. However, the result does not hold in NDS. The following example shows that there exists an NDS such that $f_{1,\infty}$ is syndetically and thickly  transitive but is not thickly syndetically transitive.

\begin{example}\label{li3}

Let $f$  be a mixing homeomorphism map on $X$.

Let $$h_i^{10}=\{f^i,f^{-i}, \underbrace{id,id,...,id}_{8-fold}\},$$
$$g_i^{k+9}=\{\underbrace{f^i,f^i,...,f^i}_{k-fold},f^{-ki},\underbrace{id,id,...,id}_{8-fold}\},$$
where $i,k\in\mathbb{N}$.

Consider the NDS $(X,f_{1,\infty})$, where $f_{1,\infty}=\{f_1,f_1,f_3,...,f_n,...\}$ is defined by
\[f_n^{p}=\begin{cases}
g_{k}^{k+9}, &  \text{if } n\in[10^k,10^k+10), \\
h_k^{10}, &  else,  \\
\end{cases}\]
where $n,k\in\mathbb{N}$ and
\[p=\begin{cases}
k+9, &  \text{if } n\in[10^k,10^k+10), \\
10, &  else.  \\
\end{cases}\]
That is,
$$f_{1,\infty}=\{f_1,f_1,f_3,...,f_n,...\}=\{h_1^{10},g_1^{10},\underbrace{h_1^{10},h_1^{10},...,h_1^{10}}_{8-fold}, g_2^{11},\underbrace{h_2^{10},h_2^{10},...,h_2^{10}}_{89-fold},g_3^{12},h_3^{10},...\}.$$
Since $f$ is mixing, then for any two non-empty open sets $U,V\subset X$, there exists an $M\in\mathbb{N}$ such that $f_1^n(U)\cap V\neq\emptyset$ for each $n>M$. Thus, it is easy to see that $f_{1,\infty}$ is thickly transitive and that $M+10$ is bounded gaps of $N_{f_{1,\infty}}(U,V)$. Therefore, $f_{1,\infty}$ is also syndetically transitive.  But take $j=10$,  for any syndetic set $A\subset\mathbb{N}$, there exist two positive integers $a\in A,b\in[0,10]$ such that $f_1^{a+b}=id$. Therefore, $f_{1,\infty}$ is not thickly syndetically transitive.

\end{example}

\begin{remark}
In \cite[Example 3.1]{wx1}, the authors gave an example showing that there exists an NDS which is weakly mixing and syndetically transitive but not multi-transitive. The Example \ref{li3} also shows that there exists an NDS which is thickly and syndetically transitive but fail to be multi-transitive. However, for ADS, thick transitivity and syndetical transitivity imply multi-transitivity \cite[Corollary 2]{wx2}. The \cite[Example 3.1]{wx1} also showed that there exists an NDS which is weakly mixing of all orders but not thickly transitive. The example below will show that there exists an NDS which is thickly transitive but not weakly mixing.
\end{remark}

\begin{example}\label{lizi4}

Let $f:S^1\rightarrow S^1$ be the map defined by $f(e^{2\pi i\theta})=e^{2\pi i(\theta+\alpha)}$, where $S^1$ is a unit circle on the complex plane, $\theta\in [0,1]$ and $\alpha$ is a irrational number. Consider the non-autonomous discrete systems $(S,f_{1,\infty})$, where $f_{1,\infty}$ is  given by
$$g_{1,\infty}=\{f,id,f,id,id,f,id,id,id,f,id,id,id,id,...,f,\underbrace{id,id,...id}_{n-fold},...\}.$$

Note that $orb_{f_{1,\infty}}(x)=orb_f(x)$. Since $f$ is transitive but not weakly mixing, then it is easy to see that $(S,f_{1,\infty})$ is thickly transitive but not weakly mixing.

\end{example}

The Example \ref{lizi2}, Example \ref{li3} and Example \ref{lizi4} show that the results related to stronger forms of transitivity which are true for ADS but fail in NDS, which show that there is a significant difference between the theory of ADS and the theory of NDS. Next we will establish a sufficient condition under which the results still hold in NDS.

Firstly, we give a sufficient condition under which the syndetical transitivity and weakly mixing of $f_{1,\infty}$ and $g_{1,\infty}$ imply the syndetical transitivity and weakly mixing of $f_{1,\infty}\times g_{1,\infty}$.
\begin{theorem}
Let $(X,f_{1,\infty}),(Y,g_{1,\infty})$ be an NDS with $f_{1,\infty},g_{1,\infty}$  being feeble open. Suppose that $\{f_n^{n+k}\}_{k\in\mathbb{N}}$ converges collectively to $\{f^k\}_{k\in\mathbb{N}}$ and that  $\{g_n^{n+k}\}_{k\in\mathbb{N}}$ converges collectively to $\{g^k\}_{k\in\mathbb{N}}$. If $f_{1,\infty},g_{1,\infty}$ are both syndetically transitive and weakly mixing, then $f_{1,\infty}\times g_{1,\infty}$ is syndetically transitive and weakly mixing.

\end{theorem}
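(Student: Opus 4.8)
The plan is to reduce the whole statement to one assertion about a single factor: under the hypotheses, the return set $N_{f_{1,\infty}}(U,V)$ is thickly syndetic for every pair of nonempty open sets $U,V\subseteq X$ (and symmetrically for $g_{1,\infty}$). Granting this, the rest is purely combinatorial. Any nonempty open subset of $X\times Y$ contains a rectangle, and since $(f_1^n\times g_1^n)(U\times U')\cap(V\times V')=(f_1^n(U)\cap V)\times(g_1^n(U')\cap V')$, one has
$$N_{f_{1,\infty}\times g_{1,\infty}}(U\times U',V\times V')=N_{f_{1,\infty}}(U,V)\cap N_{g_{1,\infty}}(U',V').$$
The thickly syndetic sets form a filter (a finite intersection of thickly syndetic sets is again thickly syndetic, hence syndetic; this rests on the partition regularity of piecewise syndetic sets). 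Thus the displayed intersection is syndetic, giving syndetic transitivity of the product; and intersecting the four thickly syndetic return sets attached to two pairs $W_1,Z_1$ and $W_2,Z_2$ (again passing to rectangles) produces a common, nonempty set of times, giving weak mixing of the product.

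To prove the core claim for $f_{1,\infty}$, I would first pass to the limit system $(X,f)$. Writing $f_1^{r+s}=f_{r+1}^{s}\circ f_1^{r}$ and using that $\{f_{r+1}^{s}\}_{s}$ converges collectively to $\{f^{s}\}_{s}$, I would show that the ADS $(X,f)$ is itself weakly mixing and syndetically transitive, so that by the ADS fact recalled in the Remarks above (weak mixing together with syndetic transitivity implies thick syndetic transitivity) the system $(X,f)$ is thickly syndetically transitive. Then I would transfer this back to $f_{1,\infty}$ by a one-sided approximation. Fix nonempty open $U,V$, choose a ball $B(v_0,2\varepsilon)\subseteq V$, and set $V'=B(v_0,\varepsilon)$; by feeble openness the set $W_r:=int(f_1^{r}(U))$ is nonempty open. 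For $r$ so large that $D(f_{r+1}^{s},f^{s})<\varepsilon$ for all $s$, any $w=f_1^{r}(u)\in W_r$ with $f^{s}(w)\in V'$ satisfies $f_1^{r+s}(u)=f_{r+1}^{s}(w)\in B(v_0,2\varepsilon)\subseteq V$, whence
$$r+N_f(W_r,V')\subseteq N_{f_{1,\infty}}(U,V).$$
Since $N_f(W_r,V')$ is thickly syndetic, so is its shift, and therefore so is the larger set $N_{f_{1,\infty}}(U,V)$, as required.

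The main obstacle is the transfer of weak mixing (and of syndetic transitivity) from $f_{1,\infty}$ to the limit $(X,f)$. This is precisely the step with no analogue for arbitrary NDS — Example \ref{lizi2} and Example \ref{li3} show that weak mixing, even of all orders together with syndetic transitivity, need not yield thick (let alone thickly syndetic) return times — so feeble openness and collective convergence must be used essentially here. I would establish it through the companion inclusion $N_{f_{1,\infty}}(U,V)-r\subseteq N_f(f_1^{r}(U),V^{+\varepsilon})$, obtained by the same approximation with $V^{+\varepsilon}$ the $\varepsilon$-neighbourhood of $V$, combined with feeble openness and surjectivity of the compositions $f_1^{r}$ to realize an arbitrary pair of basic open sets as (interiors of) images $f_1^{r}(U_j)$. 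Feeding a common return time supplied by weak mixing of $f_{1,\infty}$ into these inclusions yields the Furstenberg intersection property $N_f(A_1,B_1)\cap N_f(A_2,B_2)\neq\emptyset$ characterizing weak mixing of $(X,f)$, while the same bounded-gap transfer gives its syndetic transitivity.

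Once $(X,f)$ and $(Y,g)$ are known to be thickly syndetically transitive, the remaining work is routine: the forward inclusion of the second paragraph shows each factor NDS has thickly syndetic return sets, and the filter property of the thickly syndetic family, applied as in the first paragraph, delivers both syndetic transitivity and weak mixing of $f_{1,\infty}\times g_{1,\infty}$.
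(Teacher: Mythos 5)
Your proposal is correct in outline and follows essentially the same route as the paper, whose entire proof of this theorem is a citation chain: pass to the limit maps $f$ and $g$ using collective convergence and feeble openness, invoke the ADS fact that syndetic transitivity together with weak mixing yields thickly syndetic transitivity (\cite[Proposition 4]{wx2}), and transfer back to the non-autonomous systems; your filter argument for thickly syndetic sets then handles the product exactly as intended, and your forward inclusion $r+N_f(\mathrm{int}(f_1^{r}(U)),V')\subseteq N_{f_{1,\infty}}(U,V)$ is the same approximation used in the paper's proof of Theorem \ref{thsy}. The one point to watch is the downward transfer of weak mixing and syndetic transitivity to $(X,f)$: there you invoke surjectivity of the compositions $f_1^{r}$ to guarantee that the preimages $f_1^{-r}(A_j)$ are nonempty open sets, but surjectivity is not among the stated hypotheses of this theorem (the paper adds it explicitly in Theorem \ref{thick}, and the converse direction of its proof of Theorem \ref{thsy} silently relies on the same nonemptiness). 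You should either add that hypothesis or supply an argument that syndetic transitivity already provides, for a common $r\geq r_0$, nonempty preimages of both $A_1$ and $A_2$.
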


\begin{proof}
By \cite[Theorem 3.1]{wx6}, \cite[Proposition 4]{wx2} and \cite[Corollary 1]{q36}, we get the result.

\end{proof}

Then, we give a sufficient condition under which the syndetical transitivity and weakly mixing (or thickly transitive) of $f_{1,\infty}$ imply the thickly syndetical transitivity and multi-transitivity of $f_{1,\infty}$.
\begin{theorem}\label{budongdian0}
Let $(X,f_{1,\infty})$ be an NDS with $f_{1,\infty}$  being feeble open. Suppose that $\{f_n^{n+k}\}_{k\in\mathbb{N}}$ converges collectively to $\{f^k\}_{k\in\mathbb{N}}$. If $f_{1,\infty}$ is syndetically transitive and weakly mixing (or thickly transitive), then $f_{1,\infty}$ is thickly syndetically transitive and multi-transitive.

\end{theorem}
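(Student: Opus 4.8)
The plan is to reduce everything to the limiting autonomous system and then invoke the ADS facts recalled in the discussion above. Write $(X,f)$ for the ADS that is the collective limit of $f_{1,\infty}$, so that $f=f^1$ and $\{f_n^{k}\}_{k}$ converges collectively to $\{f^{k}\}_{k}$. The first step is to push the hypotheses from the NDS onto $(X,f)$. Because $f_{1,\infty}$ is feeble open and the iterates converge collectively, Sharma's transfer principle relating $(X,f_{1,\infty})$ to its limiting system (\cite[Corollary 1]{q36}), together with \cite[Theorem 3.1]{wx6}, should give that syndetic transitivity, weak mixing, and thick transitivity each hold for $f_{1,\infty}$ if and only if they hold for $(X,f)$. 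Applying this to the assumption, I obtain that $(X,f)$ is syndetically transitive and, in either branch, weakly mixing: in the thickly transitive branch the transfer yields thick transitivity of $(X,f)$, which for ADS coincides with weak mixing.

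The second step takes place entirely inside the ADS $(X,f)$, where the implications are already on record. Since weak mixing and thick transitivity are equivalent for ADS, in both branches $(X,f)$ is simultaneously syndetically transitive and weakly mixing. By the ADS fact that syndetic transitivity together with weak mixing gives thickly syndetic transitivity (\cite[Proposition 4]{wx2}), $(X,f)$ is thickly syndetically transitive; and by \cite[Corollary 2]{wx2}, thick transitivity together with syndetic transitivity makes $(X,f)$ multi-transitive. Hence $(X,f)$ is both thickly syndetically transitive and multi-transitive.

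The final step is to transfer these two conclusions back to $f_{1,\infty}$ using the same feeble-open, collectively-convergent machinery (\cite[Corollary 1]{q36} and \cite[Theorem 3.1]{wx6}), which is precisely what makes the correspondence with the limit bidirectional. This yields that $f_{1,\infty}$ is thickly syndetically transitive and multi-transitive, as claimed.

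The step I expect to be the main obstacle is the bidirectional transfer of the \emph{finer} combinatorial properties, namely thickly syndetic transitivity and multi-transitivity. The transfer results are stated most cleanly for transitivity and mixing-type conditions, so to apply them here one must check that feeble openness and collective convergence control the return-time sets $N_{f_{1,\infty}}(U,V)$ finely enough that their syndetic and thick structure matches that of the limit $(X,f)$; note that Example \ref{lizi4} shows the NDS notions of weak mixing and thick transitivity genuinely diverge without such a hypothesis, so the feeble-open convergent setting is exactly what collapses the two branches after transfer. Once this matching of return-time structure is confirmed, the conclusion is immediate from the ADS theory of Moothathu.
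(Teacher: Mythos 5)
Your proposal is correct and follows essentially the same route as the paper: the paper's proof is precisely the chain ``transfer the hypotheses to the limiting ADS via \cite[Theorem 3.1]{wx6} and \cite[Corollary 1]{q36} (or Theorem \ref{thick}), apply Moothathu's ADS results \cite[Lemma 1]{wx2} and \cite[Corollary 2]{wx2}, then transfer back via \cite[Theorem 3.1]{wx1} and Theorem \ref{thsy}.'' The back-transfer of thickly syndetic transitivity that you flag as the main obstacle is exactly what Theorem \ref{thsy}, proved immediately before this result, supplies, and the back-transfer of multi-transitivity is covered by the cited \cite[Theorem 3.1]{wx1}.
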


In order to show this result we need the following two theorems. Firstly, using the proof of \cite[Theorem 4.2]{wx1}, we have the result below.
\begin{theorem}\label{thick}
Let $(X,f_{1,\infty})$ be a non-autonomous system with $f_{1,\infty}$ being feeble open and surjective and converging uniformly to a map $f$. If $\{f_n^{n+k}\}_{k\in\mathbb{N}}$ converges collectively to $\{f^k\}_{k\in\mathbb{N}}$, then $f_{1,\infty}$ is thickly transitive if and only if $f$ is thickly transitive.
\end{theorem}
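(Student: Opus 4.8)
The plan is to prove both implications by comparing the long-range behaviour of $f_{1,\infty}$ with the iterates of the limit map $f$, exploiting the factorisation $f_1^{r-1+k}=f_r^{k}\circ f_1^{r-1}$ together with collective convergence, which guarantees that $f_r^{k}$ is uniformly in $k$ close to $f^{k}$ once the starting index $r$ is large. Note first that $f$ is continuous as a uniform limit of continuous maps, so each $f^{k}$ is uniformly continuous and thick transitivity of $f$ is meaningful. The general template for each direction is: fix nonempty open $U,V\subseteq X$ and $p\in\mathbb{N}$; pick $v_0\in V$ and $\eta>0$ with $B(v_0,2\eta)\subseteq V$; then use collective convergence to fix an $r$ with $D(f_r^{k},f^{k})<\eta$ for every $k$, so that any point moved by $f_r^{k}$ into $B(v_0,\eta)$ is moved by $f^{k}$ into $V$, and conversely.

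For the direction ``$f$ thickly transitive $\Rightarrow f_{1,\infty}$ thickly transitive'', with $r$ fixed as above, I would apply feeble openness $r-1$ times so that $W:=int(f_1^{r-1}(U))$ is a nonempty open set. Thick transitivity of $f$ then yields a $k$ with $f^{k+j}(W)\cap B(v_0,\eta)\neq\emptyset$ for all $0\leq j\leq p$. Pulling each witness back through $W\subseteq f_1^{r-1}(U)$ and pushing it forward by $f_r^{k+j}$ places $f_1^{r-1+k+j}(U)$ inside $V$ for all $0\leq j\leq p$, so $N_{f_{1,\infty}}(U,V)$ contains a run of length $p+1$; as $p$ is arbitrary this set is thick. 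The order of quantifiers is the only delicate point: $\eta$ depends only on $V$, $r$ only on $\eta$, and $W$ only on $r$, so fixing $r$ before invoking thick transitivity of $f$ avoids circularity.

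For the converse the new ingredient is surjectivity. With $r$ fixed as above, the set $A:=(f_1^{r-1})^{-1}(U)$ is nonempty open and satisfies $f_1^{r-1}(A)=U$. Applying thick transitivity of $f_{1,\infty}$ to $A$ and $B(v_0,\eta)$ makes $N_{f_{1,\infty}}(A,B(v_0,\eta))$ thick; since a thick set contains runs of every length, I can locate a run $\{N,N+1,\dots,N+p\}$ inside it with $N\geq r$, by first extracting a run of length $r+p+1$ and keeping its final segment. Writing $N+j=(r-1)+(m_0+j)$ and using $f_1^{N+j}=f_r^{m_0+j}\circ f_1^{r-1}$ together with $f_1^{r-1}(A)=U$ gives $f_r^{m_0+j}(U)\cap B(v_0,\eta)\neq\emptyset$; the estimate $D(f_r^{m_0+j},f^{m_0+j})<\eta$ then upgrades this to $f^{m_0+j}(U)\cap V\neq\emptyset$ for all $0\leq j\leq p$, so $N_f(U,V)$ contains a run of length $p+1$.

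The step I expect to be the main obstacle is this converse direction: one must eliminate the fixed initial block $f_1^{r-1}$ that separates $f_1^{N+j}$ from the iterate $f^{m_j}$, and it is precisely the preimage trick $A=(f_1^{r-1})^{-1}(U)$ (where surjectivity is needed to ensure $A\neq\emptyset$ and $f_1^{r-1}(A)=U$) combined with repositioning the run beyond index $r$ that makes the transfer go through. The forward direction is comparatively routine once feeble openness is used to keep $int(f_1^{r-1}(U))$ nonempty. Since this argument parallels that of \cite[Theorem 4.2]{wx1}, I would otherwise follow their bookkeeping.
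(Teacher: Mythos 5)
Your proposal is correct and follows essentially the same route as the paper: the paper proves this statement by the argument of \cite[Theorem 4.2]{wx1}, and its fully written-out proof of the companion result on thickly syndetical transitivity (Theorem \ref{thsy}) uses exactly your scheme — fix $r_0$ by collective convergence, push $U$ forward through $int(f_1^{r_0}(U))$ via feeble openness for one direction, and pull back through $f_1^{-r_0}$ of a shrunken ball (using surjectivity) for the converse, transferring hitting times by a triangle inequality. Your extra remark about repositioning the run beyond index $r$ is a harmless refinement of the same bookkeeping.
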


\begin{theorem}\label{thsy}
Let $(X,f_{1,\infty})$ be a non-autonomous system with $f_{1,\infty}$ being feeble open and converging uniformly to a map $f$. If $\{f_n^{n+k}\}_{k\in\mathbb{N}}$ converges collectively to $\{f^k\}_{k\in\mathbb{N}}$, then $f_{1,\infty}$ is thickly syndetically transitive if and only if $f$ is thickly syndetically transitive.
\end{theorem}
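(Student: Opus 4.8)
The plan is to follow the proof scheme of Theorem \ref{thick} almost verbatim, replacing the word ``thick'' by ``thickly syndetic'' throughout, and to reduce the statement to a comparison between the return sets $N_{f_{1,\infty}}(U,V)$ and the analogous return sets $N_f(A,B):=\{k\in\mathbb{N}:f^k(A)\cap B\neq\emptyset\}$ of the limit map $f$. The whole argument rests on three elementary stability properties of the family of thickly syndetic subsets of $\mathbb{N}$, which I would record first: (i) it is upward closed, since $A\subseteq B$ forces $\{m:[m,m+l]\subseteq A\}\subseteq\{m:[m,m+l]\subseteq B\}$ and a superset of a syndetic set is syndetic; (ii) it is invariant under a fixed finite translation; and (iii) it is unchanged by deleting finitely many elements (only finitely many runs can be destroyed). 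These three facts are exactly what let the shifts and shrinkings produced below be absorbed harmlessly.

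Next I would set up the key decomposition. Given nonempty open $U,V$ and $\varepsilon>0$, collective convergence of $\{f_n^{n+k}\}_k$ to $\{f^k\}_k$ supplies an $r_0\in\mathbb{N}$ with $D(f_{r_0}^{\,k},f^k)<\varepsilon$ for \emph{every} $k$; this uniformity over all exponents $k$ is essential, because the thickly syndetic condition concerns arbitrarily long runs and hence involves $f_{r_0}^{\,k}$ for unboundedly many $k$ simultaneously. Writing $f_1^{(r_0-1)+k}=f_{r_0}^{\,k}\circ f_1^{r_0-1}$ and using feeble openness to guarantee $U^{*}:=\mathrm{int}\big(f_1^{r_0-1}(U)\big)\neq\emptyset$, I would fix a nonempty open $V'$ with $B(V',\varepsilon)\subseteq V$. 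If $k\in N_f(U^{*},V')$, then some $x\in U^{*}$ has $f^{k}(x)\in V'$, so $f_{r_0}^{\,k}(x)\in B(V',\varepsilon)\subseteq V$, whence $f_1^{(r_0-1)+k}(U)\cap V\neq\emptyset$. This yields the inclusion $(r_0-1)+N_f(U^{*},V')\subseteq N_{f_{1,\infty}}(U,V)$. Running the same estimate in the opposite sense gives $\big(N_{f_{1,\infty}}(U,V')\cap[r_0,\infty)\big)-(r_0-1)\subseteq N_f\big(f_1^{r_0-1}(U),V\big)$.

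For the direction ``$f$ thickly syndetically transitive $\Rightarrow$ $f_{1,\infty}$ thickly syndetically transitive'' the first inclusion finishes the job at once: $N_f(U^{*},V')$ is thickly syndetic because $(U^{*},V')$ is just some pair of nonempty open sets, its translate by $r_0-1$ is thickly syndetic by (ii), and $N_{f_{1,\infty}}(U,V)$ contains that translate, hence is thickly syndetic by (i). For the converse I would feed an arbitrary pair $(U,V)$ into the second inclusion and conclude, via (iii) (deleting the finitely many indices below $r_0$) and (ii), that $N_f\big(f_1^{r_0-1}(U),V\big)$ is thickly syndetic.

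I expect the \textbf{main obstacle} to lie precisely here: the source set produced is $f_1^{r_0-1}(U)$ rather than a prescribed open set $U_0$, so to deduce that $N_f(U_0,V)$ is thickly syndetic for \emph{every} open $U_0$ one must realize $U_0$ (or an open subset of it) in the form $f_1^{r_0-1}(U)$. This is where feeble openness together with surjectivity of the prefix $f_1^{r_0-1}$ enters, exactly as in the proof of Theorem \ref{thick}: taking $U=\big(f_1^{r_0-1}\big)^{-1}(U_0)$, which is nonempty and open, gives $f_1^{r_0-1}(U)\subseteq U_0$, so $N_f\big(f_1^{r_0-1}(U),V\big)\subseteq N_f(U_0,V)$, and upward closure (i) then forces $N_f(U_0,V)$ to be thickly syndetic. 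The subtle point worth checking carefully is thus the transfer across the fixed prefix $f_1^{r_0-1}$ in the backward direction, since the other inclusion needs only feeble openness. Once both directions are established the equivalence follows.
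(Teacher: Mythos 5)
Your proposal is correct and follows essentially the same route as the paper's proof: the forward direction pushes a ball forward through the finite prefix $f_1^{r_0}$ via feeble openness and absorbs the resulting translate of $N_f(U',V')$ into $N_{f_{1,\infty}}(U,V)$, while the backward direction pulls back through the prefix by taking a preimage and shifts the return set the other way, exactly as you do (your bookkeeping with the three stability properties of thickly syndetic sets just makes explicit what the paper uses tacitly). Your observation that the backward direction needs surjectivity of the prefix to guarantee the preimage $\left(f_1^{r_0-1}\right)^{-1}(U_0)$ is nonempty is a fair one --- the paper's own proof uses $f_1^{-r_0}(B(x,\epsilon))$ and implicitly relies on the same surjectivity, even though it is not listed among the hypotheses of this theorem (it is listed in Theorem \ref{thick}).
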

\begin{proof}
Let $U,V$ be two non-empty open sets in $X$. Then there are $\epsilon>0$ and $x,y\in X$ such that $B(x,\epsilon)\subseteq U$ and $B(y,\epsilon)\subseteq V$. As $\{f_n^{n+k}\}_{k\in\mathbb{N}}$ converges collectively to $\{f^k\}_{k\in\mathbb{N}}$, there exists $r_0\in \mathbb{N}$ such that $D(f_r^{r+k},f^k) < \frac{\epsilon}{2}$ for all $r\geq r_0$ and for every $k\in\mathbb{N}$. Since $f_{1,\infty}$ is feeble open, $int(f_1^{r_0}(U))$ is non-empty open subset of $X$. By thickly syndetical transitivity of $(X,f)$, for open sets $U'=int(f_1^{r_0}(U))$ and $V'=B(y,\frac{\epsilon}{2})$, the set $N_f(U',V')$ is thickly syndetical. Take  $m\in N_f(U',V')$. Then $f^{m}(U')\cap V' \neq\emptyset$. Consequently, there exist $u'\in U'$ such that $f^{m}(u')\in V'$. Further, as $U'=int(f_1^{r_0}(U))$, there exist $u\in U$ such that $u'=f_1^{r_0}(u)$, hence $f^{m}(f_1^{r_0}(u))\in V'$.
Also, collective convergence ensures $d(f_1^{m+r_0}(u),f^{m}(f_1^{r_0}(u)))<\frac{\epsilon}{2}$, and hence by the triangle inequality $d(y,f_1^{m+r_0}(u))<\epsilon$. Therefore, $f_1^{m+r_0}(U)\cap V\neq\emptyset$, which implies that $N_f(U',V')+r_0\subseteq N_{f_{1,\infty}}(U,V)$. Therefore, $N_{f_{1,\infty}}(U,V)$ is thickly syndetical and hence $(X,f_{1,\infty})$ is thickly syndetically transitive.

Conversely, let $U,V$ be two non-empty open sets in $X$. Then there are $\epsilon>0$ and $x,y\in X$ such that $B(x,\epsilon)\subseteq U$ and $B(y,\epsilon)\subseteq V$.  As $\{f_n^{n+k}\}_{k\in\mathbb{N}}$ converges collectively to $\{f^k\}_{k\in\mathbb{N}}$, there exists $r_0\in \mathbb{N}$ such that $D(f_r^{r+k},f^k) < \frac{\epsilon}{2}$ for all $r\geq r_0$ and for every $k\in\mathbb{N}$. Put $U_1=f_1^{-r_0}(B(x,\epsilon))$, $V_1=B(y,\frac{\epsilon}{2})$. By thickly syndetical transitivity of $(X,f_{1,\infty})$, the set $N_{f_{1,\infty}}(U_1,V_1)$ is thickly syndetical. Let $k\in N_{f_{1,\infty}}(U_1,V_1)-r_0$. Then $f_1^{r_0+k}(U_1)\cap V_1\neq\emptyset$. Consequently, there exist $u\in U_1$ such that $d(f_1^{r_0+k}(u),y)<\frac{\epsilon}{2}$. Further, collective convergence ensures $d(f_1^{r_0+k}(u),f^{k}(f_1^{r_0}(u)))<\frac{\epsilon}{2}$, and hence by triangle inequality we have $d(y,f^{k}(f_1^{r_0}(u)))<\epsilon$. As $f_1^{r_0}(u)\in B(x,\epsilon)$, we have $f^{k}(B(x,\epsilon))\cap B(y,\epsilon)\neq\emptyset$, which implies that $N_f(U_1,V_1)-r_0\subseteq N_f(U,V)$. Therefore, $N_f(U,V)$ is thickly syndetical and hence the system $(X,f)$ is thickly syndetically transitive.

\end{proof}

\noindent {\textit{Proof of Theorem \ref{budongdian0}}}. By \cite[Theorem 3.1]{wx6}, \cite[Lemma 1]{wx2}, \cite[Corollary 1]{q36} (or Theorem \ref{thick}), \cite[Corollary 2]{wx2}, \cite[Theorem 3.1]{wx1} and Theorem \ref{thsy}, we get the result.

\qed

For ADS it is known that weakly mixing implies total transitivity. However, the \cite[Example 3.1]{wx1} shows that the result is not always true for NDS. But the following theorem shows that if we replace weakly mixing by thick transitivity, then the result also holds for NDS.

\begin{theorem}
Let $(X,f_{1,\infty})$ be an NDS. If $f_{1,\infty}$ is thickly transitive, then it is totally transitive.
\end{theorem}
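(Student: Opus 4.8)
The plan is to reduce total transitivity to a statement about the return-time sets $N_{f_{1,\infty}}(U,V)$ and then exploit the defining property of thick sets. Fix $n\in\mathbb{N}$; I must show that the iterate $f_{1,\infty}^{[n]}$ is transitive. The first step is a direct computation with the composition convention. Writing out the $k$-fold composition of the maps making up $f_{1,\infty}^{[n]}=\{f_{n(k-1)+1}^n\}_{k=1}^\infty$, the consecutive length-$n$ blocks telescope: the first map is $f_1^n=f_n\circ\cdots\circ f_1$, the second is $f_{n+1}^n=f_{2n}\circ\cdots\circ f_{n+1}$, and so on, so that the first-$k$ composition of the iterated system equals $f_{kn}\circ\cdots\circ f_1=f_1^{kn}$. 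Consequently $f_{1,\infty}^{[n]}$ is transitive precisely when, for every pair of non-empty open sets $U,V$, there exists $k\in\mathbb{N}$ with $f_1^{kn}(U)\cap V\neq\emptyset$; equivalently, the set $N_{f_{1,\infty}}(U,V)$ must meet $n\mathbb{N}$.

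Next I would invoke thick transitivity. By hypothesis $N_{f_{1,\infty}}(U,V)$ is thick, so taking $p=n$ in the definition it contains a run $\{m,m+1,\ldots,m+n\}$ of $n+1$ consecutive positive integers. Among any $n$ consecutive integers exactly one is divisible by $n$, so this run contains a multiple $kn$ of $n$, giving $kn\in N_{f_{1,\infty}}(U,V)\cap n\mathbb{N}$. This furnishes the required $k$, so $f_{1,\infty}^{[n]}$ is transitive; since $n\in\mathbb{N}$ was arbitrary, $f_{1,\infty}$ is totally transitive.

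There is no real obstacle here beyond correctly unwinding the indexing. The only point that needs care is verifying that the telescoping of the $n$-blocks yields exactly $f_1^{kn}$ and not some shifted composition, which hinges on the definition $f_i^n=f_{i+(n-1)}\circ\cdots\circ f_i$ together with the definition $f_{1,\infty}^{[n]}=\{f_{n(k-1)+1}^n\}_{k=1}^\infty$ of the $n^{th}$ iterate. Once that identification is in place, the rest is the elementary pigeonhole observation that any run of $n$ consecutive integers captures a multiple of $n$, and the conclusion follows for every $n$ simultaneously.
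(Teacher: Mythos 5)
Your proof is correct and follows essentially the same route as the paper's: both reduce transitivity of $f_{1,\infty}^{[n]}$ to finding a multiple of $n$ in the thick set $N_{f_{1,\infty}}(U,V)$, obtained from a run of $n+1$ consecutive integers. Your additional care in verifying that the blocks of $f_{1,\infty}^{[n]}$ telescope to $f_1^{kn}$ is a detail the paper leaves implicit, but the argument is the same.
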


\begin{proof}
Let $U,V$ be two non-empty open subsets of $X$ and $n\in\mathbb{N}$. Since $f_{1,\infty}$ is thickly transitive, then there exists an $m\in\mathbb{N}$ such that $\{m,m+1,...,m+n\}\subset N_{f_{1,\infty}}(U,V)$. For $\{m,m+1,...,m+n\}$, there exists an $k\in\{m,m+1,...,m+n\}$ such that $n\mid k$, that is, $k=nj$, where $j\in\mathbb{N}$. Therefore, $f_1^{nj}(U)\cap V=f_1^{k}(U)\cap V\neq\emptyset$, which implies that $f_{1,\infty}^{[n]}$ is  transitive. Therefore, $f_{1,\infty}$ is totally transitive.

\end{proof}

\begin{theorem}\label{strong}
Let $(X,f_{1,\infty})$ be a non-autonomous system with $f_{1,\infty}$ being feeble open and converging uniformly to a map $f$. If $\{f_n^{n+k}\}_{k\in\mathbb{N}}$ converges collectively to $\{f^k\}_{k\in\mathbb{N}}$, then $f_{1,\infty}$ is strongly multi-transitive if and only if $f$ is strongly multi-transitive.
\end{theorem}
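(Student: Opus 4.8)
The plan is to reduce strong multi-transitivity of each system to weak mixing of all orders of its product iterates and then to transfer that property across the limit. By Theorem~\ref{strong4} (the equivalence $(1)\Leftrightarrow(4)$), $f_{1,\infty}$ is strongly multi-transitive if and only if for every $k\in\mathbb{N}$ the non-autonomous system $G_k:=f_{1,\infty}\times f_{1,\infty}^{[2]}\times\cdots\times f_{1,\infty}^{[k]}$ on $X^{k}$ is weakly mixing of all orders; specializing the same theorem to the constant family $f_n\equiv f$ shows that $f$ is strongly multi-transitive if and only if, for every $k$, the autonomous system $H_k:=f\times f^{2}\times\cdots\times f^{k}$ on $X^{k}$ is weakly mixing of all orders. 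Thus it suffices to prove, for each fixed $k$, that $G_k$ is weakly mixing of all orders precisely when $H_k$ is.

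First I would check that $G_k$ satisfies on $X^{k}$ exactly the hypotheses imposed on $f_{1,\infty}$. A finite product and any composition of feeble open maps is again feeble open, so $G_k$ is feeble open. A direct computation gives $(f_{1,\infty}^{[j]})_r^{s}=f_{j(r-1)+1}^{js}$, whence the $j$-th coordinate of $(G_k)_r^{s}$ is $f_{j(r-1)+1}^{js}$; since $j(r-1)+1\geq r$ for every $j\geq1$, the collective convergence of $\{f_r^{s}\}$ to $\{f^{s}\}$ forces $(G_k)_r^{s}\to (H_k)^{s}$ collectively, and the $n$-th map of $G_k$, namely $(f_n,f_{2n-1}^{2},\dots,f_{k(n-1)+1}^{k})$, converges to $(f,f^{2},\dots,f^{k})$, so $G_k\to H_k$ uniformly. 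Hence $G_k$ and $H_k$ fall under the structural setting of Theorem~\ref{thsy}.

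The core is a transfer principle for weak mixing of each finite order $n$, obtained by the device of Theorem~\ref{thsy}. Fixing open sets and an $\varepsilon$ whose $\varepsilon$-balls lie inside them, I choose $r_0$ with $D\big((G_k)_r^{s},(H_k)^{s}\big)<\varepsilon/2$ for all $r\geq r_0$ and all $s$. For ``$H_k$ weakly mixing of order $n$ $\Rightarrow$ $G_k$ weakly mixing of order $n$'' I push each source box forward by $(G_k)_1^{r_0}$ (which has nonempty interior by feeble openness), invoke the weak mixing of $H_k$ to get one time $m$ meeting all $n$ target balls at once, and convert it by collective convergence into the single time $m+r_0$ for $G_k$; the crucial point is that one and the \emph{same} shift $r_0$ serves all $n$ coordinates, so a common hitting time survives. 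The reverse implication uses the pre-images $(G_k)_1^{-r_0}(\cdot)$ and the shift $-r_0$ symmetrically. Running this for every $n$ gives ``$G_k$ weakly mixing of all orders $\Leftrightarrow$ $H_k$ weakly mixing of all orders,'' and quantifying over $k$ together with the two reductions above yields the theorem.

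The step I expect to be the main obstacle is this transfer of weak mixing of order $n$, specifically the need to preserve a single hitting time across all $n$ pairs after the $\pm r_0$ translation: unlike bare transitivity, where one only needs some $N(U,V)$ to be nonempty, here the one time furnished for $H_k$ (or $G_k$) must still work simultaneously in every coordinate after the shift, which is why a uniform $r_0$---legitimate precisely because collective convergence is uniform in $s$---is indispensable. A secondary technical nuisance is the nonemptiness of $(G_k)_1^{-r_0}(\cdot)$ in the reverse direction, which is dispatched exactly as in the proof of Theorem~\ref{thsy}.
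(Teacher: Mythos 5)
Your reduction is correct in outline but follows a genuinely different route from the paper. The paper proves the theorem directly on $X$: fixing a vector $\bm{a}=(a_1,\dots,a_l)$ and balls $U_i=B(x_i,\epsilon)$, $V_i=B(y_i,\epsilon)$, it picks the uniform $r_0$ from collective convergence, pushes each $U_i$ forward by $f_1^{a_ir_0}$ (feeble openness), applies strong multi-transitivity of $f$ to get one common parameter $m$, and converts it to the common parameter $m+r_0$ for $f_{1,\infty}$ via the $\epsilon/2$ triangle-inequality estimate $d\bigl(f_1^{(m+r_0)a_i}(u_i),f^{ma_i}(f_1^{a_ir_0}(u_i))\bigr)<\epsilon/2$; the converse mirrors this with preimages $f_1^{-r_0a_i}(B(x_i,\epsilon))$. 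You instead route everything through Theorem~\ref{strong4}\,(1)$\Leftrightarrow$(4), reducing both sides to weak mixing of all orders of the product iterates $G_k$ and $H_k$, and then transfer that property coordinate-by-order with the Theorem~\ref{thsy} device. Your analytic core (a single $r_0$, uniform in the length $s$ and in the coordinate, preserving one common hitting time under the $\pm r_0$ shift) is exactly the paper's core, just applied on $X^k$ rather than on $X$ with the weights $a_i$; your verifications that $G_k$ is feeble open and that $(G_k)_r^s\to(H_k)^s$ collectively (via $j(r-1)+1\geq r$) are correct and are the price you pay for the extra packaging. What your route buys is modularity; what the paper's buys is brevity and independence from Theorem~\ref{strong4}.

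One point you should make explicit in the reverse direction: after forming the preimage sets, weak mixing of order $n$ of $G_k$ only hands you \emph{some} hitting time $t$, whereas your argument needs $t>r_0$ in order to factor $(G_k)_1^{t}=(G_k)_{r_0+1}^{t-r_0}\circ(G_k)_1^{r_0}$ and invoke collective convergence. The paper closes exactly this gap by citing Lemma~\ref{yinli2} (the common hitting-time set is infinite); you would need that lemma, or its analogue for weak mixing of all orders, at the same spot. The possible emptiness of $(G_k)_1^{-r_0}(\cdot)$ that you flag is a genuine issue, but it is equally present in the paper's own converse (which takes $f_1^{-r_0a_i}(B(x_i,\epsilon))$ without assuming surjectivity), so it is not a defect of your approach relative to the paper's.
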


\begin{proof}
Let $l\in\mathbb{N}$ and $\bm{a}=(a_1,a_2,...,a_l)\in\mathbb{N}^l$.  Let $\epsilon>0$ be given and let $U_i=B(x_i,\epsilon)$ and $V_i=B(y_i,\epsilon)$ for $i\in\{1,2,...,l\}$ be a collection of non-empty open sets in $X$. As $\{f_n^{n+k}\}_{k\in\mathbb{N}}$ converges collectively to $\{f^k\}_{k\in\mathbb{N}}$, there exists $r_0\in \mathbb{N}$ such that $D(f_r^{r+k},f^k) < \frac{\epsilon}{2}$ for all $r\geq r_0$ and for every $k\in\mathbb{N}$. Since $f_{1,\infty}$ is feeble open, each $int(f_1^{a_ir_0}(U_i))$ is non-empty open subset of $X$ for $i\in\{1,2,...,l\}$. Thus by strongly multi-transitivity of $(X,f)$, for open sets $U_i'=int(f_1^{a_ir_0}(U_i))$ and $V_i'=B(y_i,\frac{\epsilon}{2})$, there exists $m\in \mathbb{N}$ such that $f^{ma_i}(U_i')\cap V_i' \neq\emptyset$ for every $i\in\{1,2,...,l\}$. Consequently there exist $u_i'\in U_i'$ such that $f^{ma_i}(u_i')\in V_i'$ for every $i\in\{1,2,...,l\}$. Further, as $U_i'=int(f_1^{a_ir_0}(U_i))$, there exist $u_i\in U_i$ such that $u_i'=f_1^{a_ir_0}(u_i)$ for every $i\in\{1,2,...,l\}$, hence $f^{ma_i}(f_1^{a_ir_0}(u_i))\in V_i'$ for every $i\in\{1,2,...,l\}$.
Also, collective convergence ensures $d(f_1^{(m+r_0)a_i}(u_i),f^{ma_i}(f_1^{a_ir_0}(u_i)))<\frac{\epsilon}{2}$, and hence by the triangle inequality $d(y_i,f_1^{(m+r_0)a_i}(u_i))<\epsilon$ for every $i\in\{1,2,...,l\}$. Therefore, $f_1^{(m+r_0)a_i}(U_i)\cap V_i\neq\emptyset$ for every $i\in\{1,2,...,l\}$. Hence $(X,f_{1,\infty})$ is strongly multi-transitive.

Conversely, let $l\in\mathbb{N}$ and $\bm{a}=(a_1,a_2,...,a_l)\in\mathbb{N}^l$.  Let $\epsilon>0$ be given and let $U_i=B(x_i,\epsilon)$ and $V_i=B(y_i,\epsilon)$ for $i\in\{1,2,...,l\}$ be a collection of non-empty open sets in $X$. As $\{f_n^{n+k}\}_{k\in\mathbb{N}}$ converges collectively to $\{f^k\}_{k\in\mathbb{N}}$, there exists $r_0\in \mathbb{N}$ such that $D(f_r^{r+k},f^k) < \frac{\epsilon}{2}$ for all $r\geq r_0$ and for every $k\in\mathbb{N}$. Put $U_i=f_1^{-r_0a_i}(B(x_i,\epsilon))$, $V_i=S(y_1,\frac{\epsilon}{2})$ for every $i\in\{1,2,...,l\}$. Applying Lemma \ref{yinli2} we can choose $k\in\mathbb{N}$ such that $f_1^{(r_0+k)a_i}(U_i)\cap V_i\neq\emptyset$ for every $i\in\{1,2,...,l\}$. Consequently, for every $i\in\{1,2,...,l\}$, there exist $u_i\in U_i$ such that $d(f_1^{(r_0+k)a_i}(u_i),y_i)<\frac{\epsilon}{2}$. Further, collective convergence ensures $d(f_1^{(r_0+k)a_i}(u_i),f^{ka_i}(f_1^{r_0a_i}(u_i)))<\frac{\epsilon}{2}$, and hence by triangle inequality we have $d(y_i,f^{ka_i}(f_1^{r_0a_i}(u_i)))<\epsilon$ for every $i\in\{1,2,...,l\}$. As $f_1^{a_ir_0}(u_i)\in B(x_i,\epsilon)$, we have $f^{ka_i}(B(x_i,\epsilon))\cap B(y_i,\epsilon)\neq\emptyset$ for every $i\in\{1,2,...,l\}$. Therefore, the system $(X,f)$ is strongly multi-transitive.

\end{proof}

\begin{theorem}\label{dengjia6}
Let $(X,f_{1,\infty})$ be a non-autonomous system with $f_{1,\infty}$ being feeble open and surjective and converging uniformly to a map $f$. If $(X,f_{1,\infty})$ is minimal such that $\{f_n^{n+k}\}_{k\in\mathbb{N}}$ converges collectively to $\{f^k\}_{k\in\mathbb{N}}$, then the following are equivalent:

(1)$f_{1,\infty}\times f_{1,\infty}^{[2]}: X^2\rightarrow X^2$ is transitive.

(2)$f_{1,\infty}$ is multi-transitive.

(3)$f_{1,\infty}$ is weakly mixing.

(4)$f_{1,\infty}$ is weakly mixing of all orders.

(5)$f_{1,\infty}$ is thickly transitive.

(6)$f_{1,\infty}$ is strongly multi-transitive.

\end{theorem}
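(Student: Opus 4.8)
The plan is to transport every condition to the uniform limit $f$, where the classical theory of minimal autonomous systems applies, and then to close a single cycle of implications. Two transports are already in hand: Theorem \ref{thick} gives that $f_{1,\infty}$ is thickly transitive iff $f$ is, and Theorem \ref{strong} gives that $f_{1,\infty}$ is strongly multi-transitive iff $f$ is. I would first dispose of the implications that are free for any NDS: by Theorem \ref{strong4}, condition (6) is equivalent to the weak mixing of all orders of each product $f_{1,\infty}\times\cdots\times f_{1,\infty}^{[k]}$, and the case $k=1$ is exactly (4); condition (4) trivially gives (3); and from the definitions (6) gives (2) (use the vector $(1,2,\dots,m)$) while (2) gives (1) (the case $m=2$). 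It therefore suffices to produce the arcs $(3)\Rightarrow(5)$, $(5)\Rightarrow(6)$ and $(1)\Rightarrow(6)$, since then $(6)\Rightarrow(4)\Rightarrow(3)\Rightarrow(5)\Rightarrow(6)$ and $(6)\Rightarrow(2)\Rightarrow(1)\Rightarrow(6)$ are both closed. We may assume $X$ is infinite (otherwise the system is a single periodic orbit and all six conditions stand or fall together), hence $X$ has no isolated points.

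The conceptual core is a descent lemma that I would prove first: under collective convergence and surjectivity, minimality of $(X,f_{1,\infty})$ forces minimality of $(X,f)$. Given $z\in X$ and $\varepsilon>0$, pick $r_0$ with $D(f_{r}^{k},f^{k})<\varepsilon/2$ for all $r\ge r_0$ and all $k$, and use surjectivity of $f_1^{r_0}$ to write $z=f_1^{r_0}(x)$. Minimality makes the orbit of $x$ dense, and since $X$ is perfect the tail $\{f_1^{r_0+k}(x):k\ge 1\}=\{f_{r_0+1}^{k}(z):k\ge 1\}$ remains dense; collective convergence then forces the $f$-orbit $\{f^{k}(z):k\ge 1\}$ to be $\varepsilon$-dense. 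As $\varepsilon$ was arbitrary the $f$-orbit of every $z$ is dense, so $(X,f)$ is minimal.

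With $(X,f)$ minimal, the remaining arcs follow by composing transports with autonomous facts. For $(3)\Rightarrow(5)$: weak mixing of $f_{1,\infty}$ transports to weak mixing of $f$ (by the feeble-open and collective-convergence device used for Theorem \ref{thsy} and Theorem \ref{strong}, or by \cite[Theorem 3.1]{wx6}), weak mixing equals thick transitivity for ADS, and Theorem \ref{thick} carries thick transitivity of $f$ back to $f_{1,\infty}$. For $(5)\Rightarrow(6)$: Theorem \ref{thick} sends thick transitivity up to $f$, where for the minimal ADS $(X,f)$ it yields weak mixing and hence strong multi-transitivity by \cite{wx2} together with the autonomous form of Theorem \ref{strong4}, and Theorem \ref{strong} returns strong multi-transitivity to $f_{1,\infty}$. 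For $(1)\Rightarrow(6)$: transitivity of $f_{1,\infty}\times f_{1,\infty}^{[2]}$ transports to transitivity of $f\times f^{2}$, which for minimal $(X,f)$ gives weak mixing and then strong multi-transitivity by the same autonomous equivalences, and Theorem \ref{strong} brings this back down. Combining the free implications with these three arcs closes both cycles.

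The main obstacle is the transport of the product-type conditions: one must ensure that the comparison between $f_{r_0+1}^{k}$ and $f^{k}$ lets hitting times pass simultaneously across all coordinates of $X^{m}$ (for weak mixing of all orders and for $f\times f^{2}$), which is exactly where feeble openness, used to pass to the interiors of the images of the given open sets under the relevant iterate of $f_1$, must be combined with collective convergence as in the proofs of Theorem \ref{thsy} and Theorem \ref{strong}. The descent lemma is the other delicate ingredient, since without the minimality of $f$ the autonomous implications weak mixing $\Rightarrow$ multi-transitive and $f\times f^{2}$ transitive $\Rightarrow$ weak mixing fail in general, which is precisely where the counterexamples for unrestricted NDS arise.
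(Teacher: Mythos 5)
Your proposal is correct in outline and rests on the same underlying strategy as the paper --- transport each condition to the uniform limit $f$ via feeble openness and collective convergence, apply the autonomous theory of minimal systems, and transport back --- but it is organised quite differently. The paper's proof is essentially a three-line citation: the equivalence of (1), (2), (4) and (5) is taken wholesale from \cite[Corollary 4.1]{wx1}, the equivalence of (3) and (4) comes from \cite[Corollary 1]{q36} together with the ADS fact that weak mixing coincides with weak mixing of all orders, and the equivalence of (6) and (4) from Theorems \ref{strong4} and \ref{strong}. You instead rebuild the first block from scratch by closing the two cycles $(6)\Rightarrow(4)\Rightarrow(3)\Rightarrow(5)\Rightarrow(6)$ and $(6)\Rightarrow(2)\Rightarrow(1)\Rightarrow(6)$, which forces you to make explicit a step the paper never states: that minimality of $(X,f_{1,\infty})$ descends to minimality of the limit system $(X,f)$. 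Your descent lemma is sound under surjectivity and collective convergence, and identifying it is genuinely useful, since without minimality of $f$ the autonomous implications you invoke fail; what the paper's route buys in exchange is brevity, at the cost of hiding this point inside the cited corollary. Three places in your sketch deserve tightening. First, the reduction ``$X$ infinite minimal hence perfect'' is asserted rather than proved; it does hold under the theorem's convergence hypotheses but is not automatic for a bare minimal NDS. Second, the autonomous fact needed for $(5)\Rightarrow(6)$ and $(1)\Rightarrow(6)$ is that a minimal weakly mixing ADS is multi-transitive with respect to \emph{every} vector (via thickly syndetic transitivity, \cite[Corollary 2]{wx2}), not merely multi-transitive; this should be spelled out before Theorem \ref{strong} is applied. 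Third, the transport of transitivity of $f_{1,\infty}\times f_{1,\infty}^{[2]}$ down to $f\times f^{2}$ is claimed but not carried out; it does follow the template of Theorem \ref{strong} with the two shift radii $r_0$ and $2r_0$, but it is not literally covered by any statement already proved in the paper, so a short argument is needed.
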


\begin{proof}
By \cite[Corollary 4.1]{wx1}, (1), (2), (4) and (5) are equivalent. By \cite[Corollary 1]{q36} and the fact that $f$ is weakly mixing if and only if $f$ is weakly mixing of all orders for ADS, we get that (3) is equivalent to (4). By Theorem \ref{strong4} and Theorem \ref{strong}, we have that (6) is equivalent to (4).

\end{proof}

\begin{theorem}\label{delta}
Let $(X,f_{1,\infty})$ be an NDS and $\{f_n^{n+k}\}_{k\in\mathbb{N}}$ converge collectively to $\{f^k\}_{k\in\mathbb{N}}$. \\
(1) If $f_{1,\infty}$ is feeble open and $\Delta$-transitive, then $f$ is also $\Delta$-transitive. \\
(2) If $f$ commutes with each $f_n$ of $f_{1,\infty}$ and is $\Delta$-transitive, then $f_{1,\infty}$ is also $\Delta$-transitive.
\end{theorem}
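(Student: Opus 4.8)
The plan is to reduce both implications, via Theorem \ref{dingli4}, to the finite-intersection form of $\Delta$-transitivity: a system $g_{1,\infty}$ is $\Delta$-transitive iff for every $m$ and all non-empty open $U_0,\dots,U_m$ there is $n$ with $\bigcap_{i=0}^m g_1^{-in}(U_i)\neq\emptyset$ (apply the theorem to $f$ by regarding it as the constant NDS $g_n=f$). The only analytic input is collective convergence, used in the form: for each $\varepsilon>0$ there is $r_0$ with $D(f_r^k,f^k)<\varepsilon/2$ for all $r\ge r_0$ and all $k$, together with the splitting $f_1^{r_0+k}=f_{r_0+1}^{k}\circ f_1^{r_0}$ and $f_{r_0+1}^k\approx f^k$. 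The recurring difficulty is that the first $r_0$ maps do not approximate $f$, so a length-$k$ orbit of $f_{1,\infty}$ only matches a length-$(k-r_0)$ orbit of $f$; this offset is harmless for a single return time (as in Theorems \ref{thsy} and \ref{strong}) but fights the rigid ``multiples of $n$'' pattern of a $\Delta$-orbit.

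For (1), I would fix $U_0,\dots,U_m$ with $B(x_j,\varepsilon)\subseteq U_j$, choose $r_0$ as above, and set $N=r_0+1$. To defeat the offset I first force the return time past $r_0$: apply the finite-intersection form of $\Delta$-transitivity of $f_{1,\infty}$ to the $(m+1)N+1$ open sets $W_{(j+1)N}=B(x_j,\varepsilon/2)$ for $j=0,\dots,m$ and $W_s=X$ otherwise. This gives $n'$ and $w$ with $f_1^{(j+1)Nn'}(w)\in B(x_j,\varepsilon/2)$; writing $n=Nn'>r_0$, $y=f_1^{r_0}(w)$ and $z=f^{\,n-r_0}(y)$, the identity $f^{jn}(z)=f^{(j+1)n-r_0}(y)$ together with $f_1^{(j+1)n}(w)=f_{r_0+1}^{(j+1)n-r_0}(y)\approx f^{(j+1)n-r_0}(y)$ yields $d(f^{jn}(z),x_j)<\varepsilon$, i.e. $z\in\bigcap_{j=0}^m f^{-jn}(U_j)$. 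By Theorem \ref{dingli4} this proves $f$ is $\Delta$-transitive. Feeble openness enters only to keep pushed-forward sets non-degenerate if one prefers the set-wise formulation; the construction above is forward only, so no surjectivity is needed.

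For (2) I would run the dual construction, using commutativity to move the bad initial block to the \emph{outside}. Since $f$ commutes with every $f_n$, it commutes with each $f_1^{k}$, so $f_1^{jn}(p)=f_{r_0+1}^{jn-r_0}(f_1^{r_0}(p))\approx f^{jn-r_0}(f_1^{r_0}(p))=f_1^{r_0}\big(f^{\,jn-r_0}(p)\big)$. Taking $p=f^{r_0}(z)$ and commuting once more gives $f_1^{jn}(p)\approx f^{jn}\big(f_1^{r_0}(z)\big)$, so that the $f_{1,\infty}$-diagonal orbit of $p$ is, up to $\varepsilon$, the image under $f_1^{r_0}$ of the $f$-diagonal orbit of $z$. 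Feeding a point $z$ with dense $f$-diagonal orbit (a dense $G_\delta$ by Theorem \ref{dingli4}) through this intertwining should produce the required returns for $f_{1,\infty}$, again arranging $n>r_0$ by the multiple-of-$N$ device.

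The main obstacle is exactly this last transfer in (2): the fixed, non-convergent factor $f_1^{r_0}$ need not be open or surjective, so a dense $f$-orbit maps only to an orbit dense in $\overline{f_1^{r_0}(X)^m}$, not a priori in $X^m$. The crux is to show, from commutativity alone, that $f_1^{r_0}$ does not collapse the diagonal orbits: here one exploits that $f$ is surjective (being transitive on a compact space) and that $f\circ f_1^{r_0}=f_1^{r_0}\circ f$ makes $f_1^{r_0}(X)$ a fully $f$-invariant closed set, in order to push the density through. Part (1) is essentially routine once the offset is absorbed by the re-indexing $z=f^{\,n-r_0}(y)$; part (2) is where the real work lies.
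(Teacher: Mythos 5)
Your part (1) is correct, and it takes a genuinely different route from the paper's. The paper copes with the non-convergent initial block by pushing the targets forward, setting $V_i=\mathrm{int}\bigl(f_1^{ir_0}(B(u_i,\varepsilon/2))\bigr)$ (this is exactly where feeble openness is used) and then peeling $f_1^{ir_0}$ off a diagonal-orbit point of $f_{1,\infty}$; your padding device --- placing $B(x_j,\varepsilon/2)$ at the slots $(j+1)N$ with $N=r_0+1$ and $X$ everywhere else, so that the common return time $n=Nn'$ automatically exceeds $r_0$ and the single re-indexing $z=f^{\,n-r_0}(f_1^{r_0}(w))$ aligns all coordinates simultaneously --- achieves the same end purely through the finite-intersection form of Theorem \ref{dingli4}. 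The index bookkeeping checks out ($f_1^{(j+1)n}(w)=f_{r_0+1}^{(j+1)n-r_0}(y)$ with $y=f_1^{r_0}(w)$ and $(j+1)n-r_0\geq 1$), and your argument uses neither feeble openness nor surjectivity, so it in fact proves a slightly stronger version of (1) than the paper states.

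Part (2) is where the gap is, and the patch you propose does not close it. Knowing that $f_1^{r_0}(X)$ is closed and fully $f$-invariant and that $f$ is surjective does not make $f_1^{r_0}(X)$ dense: a transitive non-minimal map has proper closed invariant subsets, and the image of a non-surjective map commuting with $f$ is precisely such a set. Concretely, let $f=\sigma$ be the full shift, let $f_1$ be a non-surjective cellular automaton (so $f_1\sigma=\sigma f_1$) and $f_n=\sigma$ for $n\geq2$. Then $f_r^k=\sigma^k$ for $r\geq2$, so collective convergence holds trivially, $f$ commutes with every $f_n$ and is $\Delta$-transitive, yet $f_1^n(X)=\sigma^{n-1}(f_1(X))=f_1(X)\subsetneq X$ for every $n$, so $f_{1,\infty}$ is not even transitive. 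Hence the obstruction you flag is fatal to statement (2) as written, not merely to your transfer step; some additional hypothesis such as surjectivity or feeble openness of $f_{1,\infty}$ is required (the paper's own proof tacitly assumes one when it treats $V_i=f_1^{-ir_0}(W_i)$ as a non-empty open set). Granting such a hypothesis, your intertwining identity $f_1^{in}(p)\approx f^{\,in-r_0}(f_1^{r_0}(p))=f_1^{r_0}(f^{\,in-r_0}(p))$ is the same computation the paper performs, and the argument can then be completed; without it, no amount of invariance pushes the density of the $f$-diagonal orbit through $f_1^{r_0}$.
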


\begin{proof}
(1). Let $m\in\mathbb{N}$ and $U_1,U_2,...,U_m\subset X$ be non-empty open sets. Then there exist $u_1,u_2,...,u_m\in X$ and $\varepsilon>0$ such that $B(u_i,\varepsilon)\subset U_i$ for each $i=1,2,...,m$. As $\{f_n^{n+k}\}_{k\in\mathbb{N}}$ converges collectively to $\{f^k\}_{k\in\mathbb{N}}$, there exists $r_0\in\mathbb{N}$ such that $D(f_r^k,f^k) < \frac{\varepsilon}{2}$ for all $r\geq r_0$ and for every $k\in\mathbb{N}$, which implies $d(f_r^k(z),f^k(z)) < \frac{\varepsilon}{2}$ for all $r\geq r_0$ and for every $k\in\mathbb{N}$ and any $z\in X$. Put $W_i=B(u_i,\frac{\varepsilon}{2})$ and $V_i=int(f_1^{ir_0}(W_i))$ for each $i=1,2,...,m$. Since $f_{1,\infty}$ is feeble open, therefore for every $i\in\{1,2,...,m\}$ $V_i$ is a non-empty open set. By $\Delta$-transitivity of $f_{1,\infty}$, there exists a dense $G_\delta$ subset $Y\subset X$ such that for every $x\in Y$, $\{(f_1^{n}(x),f_1^{2n}(x),...,f_1^{mn}(x):n\in A\}$ is dense in $X^m$. That is, for every $x\in Y$ there exists an $l>r_0$ such that $f_1^{li}(x)\in V_i$, which implies that $f_1^{li}(x)\in f_1^{ir_0}(W_i)$. Therefore, $f_{ir_0+1}^{(l-r_0)i}(x)\in W_i$, which implies $d(f_{ir_0+1}^{(l-r_0)i}(x),u_i)<\frac{\varepsilon}{2}$. Since $d(f_{ir_0+1}^{(l-r_0)i}(x),f^{(l-r_0)i}(x)) < \frac{\varepsilon}{2}$, by triangle inequality, we have $d(f^{(l-r_0)i}(x),u_i) <\varepsilon$, which implies that $f^{(l-r_0)i}(x)\in U_i$ for each $i=1,2,...,m$. Thus, $f$ is also $\Delta$-transitive.

(2). Let $m\in\mathbb{N}$ and $U_1,U_2,...,U_m\subset X$ be non-empty open sets. Then there exist $u_1,u_2,...,u_m\in X$ and $\varepsilon>0$ such that $B(u_i,\varepsilon)\subset U_i$ for each $i=1,2,...,m$. As $\{f_n^{n+k}\}_{k\in\mathbb{N}}$ converges collectively to $\{f^k\}_{k\in\mathbb{N}}$, there exists $r_0\in\mathbb{N}$ such that $D(f_r^k,f^k) < \frac{\varepsilon}{2}$ for all $r\geq r_0$ and for every $k\in\mathbb{N}$, which implies $d(f_r^k(z),f^k(z)) < \frac{\varepsilon}{2}$ for all $r\geq r_0$ and for every $k\in\mathbb{N}$ and any $z\in X$. Put $W_i=B(u_i,\frac{\varepsilon}{2})$ and $V_i=f_1^{-ir_0}(W_i)$ for each $i=1,2,...,m$. By $\Delta$-transitivity of $f$, there exists a dense $G_\delta$ subset $Y\subset X$ such that for every $x\in Y$, $\{(f_1^{n}(x),f_1^{2n}(x),...,f_1^{mn}(x):n\in A\}$ is dense in $X^m$. That is, for every $x\in Y$ there exists an $l\in\mathbb{N}$ such that $f^{li}(x)\in V_i$, which implies that $f^{li}(x)\in f_1^{-ir_0}(W_i)$. Therefore, $f_{1}^{ir_0}\circ f^{li}(x)\in W_i$, which implies $d(f_{1}^{ir_0}\circ f^{li}(x),u_i)<\frac{\varepsilon}{2}$. Since $f$ commutes with each $f_n$ of $f_{1,\infty}$, we have $d(f^{li}\circ f_{1}^{ir_0}(x),u_i)<\frac{\varepsilon}{2}$. As $d(f_{ir_0+1}^{li}\circ f_{1}^{ir_0}(x),f^{li}\circ f_{1}^{ir_0}(x)) < \frac{\varepsilon}{2}$, by triangle inequality, we have $d(f_{ir_0+1}^{li}\circ f_{1}^{ir_0}(x),u_i) <\varepsilon$, which implies that $f^{(l+r_0)i}(x)\in U_i$ for each $i=1,2,...,m$. Thus, $f_{1,\infty}$ is also $\Delta$-transitive.

\end{proof}

For autonomous discrete dynamical system it is known that if $(X,f)$ is a minimal homeomorphism system, then weakly mixing, multi-transitivity and $\Delta$-transitivity are equivalent. However, the following example shows that the result is not always true for NDS.

\begin{example}

Let $(X,f)$ be autonomous discrete dynamical system such that $f$ is a homeomorphism which is both minimal and weakly mixing (in ). Consider the non-autonomous discrete systems $(X,f_{1,\infty})$, where $f_{1,\infty}$ is  given by
$$f_{1,\infty}=\{f,f^{-1},f^2,f^{-2},...,f^n,f^{-n},...\}.$$

Since $f$ is weakly mixing and $f_1^{2k-1}=g_1^{2k}=f^k$ for each $k\in\mathbb{N}$, $f_{1,\infty}$ is weakly mixing. Note that $orb_{f_{1,\infty}}(x)=\{x,f(x),f^2(x),...\}=orb_f(x)$ for any $x\in X$. Since $f$ is  minimal, $f_{1,\infty}$ is also minimal. But note that $f_1^{2k}=id$ for any $k\in\mathbb{N}$, so $f_{1,\infty}$ is neither multi-transitive  nor $\Delta$-transitive.

\end{example}

\begin{theorem}
Let $(X,f_{1,\infty})$ be an NDS and $\{f_n^{n+k}\}_{k\in\mathbb{N}}$ converge collectively to $\{f^k\}_{k\in\mathbb{N}}$. If $f_{1,\infty}$ is feeble open and $f$ commutes with each $f_n$ of $f_{1,\infty}$. Then the following are equivalent:

(1) $f_{1,\infty}\times f_{1,\infty}^{[2]}: X^2\rightarrow X^2$ is transitive.

(2) $f_{1,\infty}$ is multi-transitive.

(3) $f_{1,\infty}$ is weakly mixing.

(4) $f_{1,\infty}$ is weakly mixing of all orders.

(5) $f_{1,\infty}$ is thickly transitive.

(6) $f_{1,\infty}$ is strongly multi-transitive.

(7) $f_{1,\infty}$ is $\Delta$-transitive.

\end{theorem}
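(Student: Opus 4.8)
The plan is to reduce every one of the seven conditions to the corresponding property of the limiting autonomous system $(X,f)$ and then close the loop using the classical ADS theory together with the transfer results already proved under collective convergence and feeble openness. For the six conditions $(1)$--$(6)$ I would argue exactly as in the proof of Theorem~\ref{dengjia6}: the equivalence of $(1)$, $(2)$, $(4)$ and $(5)$ is \cite[Corollary 4.1]{wx1}; the equivalence $(3)\Leftrightarrow(4)$ comes from \cite[Corollary 1]{q36} (which matches weak mixing of $f_{1,\infty}$ with weak mixing of $f$) together with the fact that for ADS weak mixing and weak mixing of all orders coincide; and $(6)\Leftrightarrow(4)$ follows from Theorem~\ref{strong4} and Theorem~\ref{strong}. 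Thus the only genuinely new task is to splice condition $(7)$ into this cycle.

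To handle $(7)$ I would invoke Theorem~\ref{delta}, which is tailored to exactly the two hypotheses in force. Since $f_{1,\infty}$ is feeble open, part~(1) of Theorem~\ref{delta} gives that $\Delta$-transitivity of $f_{1,\infty}$ implies $\Delta$-transitivity of the limit map $f$; since $f$ commutes with every $f_n$, part~(2) gives the converse. Hence $(7)$ is equivalent to ``$f$ is $\Delta$-transitive.'' It then suffices to identify $\Delta$-transitivity of $f$ with a condition already in the chain, namely weak mixing of $f$ (equivalently $(3)$, via \cite[Corollary 1]{q36}). The implication ``$f$ $\Delta$-transitive $\Rightarrow$ $f$ weakly mixing'' is Moothathu's implication for ADS \cite{wx2}, which combined with Theorem~\ref{delta}(1) yields $(7)\Rightarrow(3)$ with no extra input.

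The main obstacle is the reverse ADS implication ``$f$ weakly mixing $\Rightarrow$ $f$ $\Delta$-transitive,'' where the real work lies: for a general ADS, $\Delta$-transitivity is strictly stronger than weak mixing --- indeed the introduction recalls that there exist strongly mixing maps which are not $\Delta$-transitive --- so this step cannot be formal and must exploit the structural features of the limit system (the recurrence/minimality that underlies the equivalences of Theorem~\ref{dengjia6}), for which weak mixing, multi-transitivity and $\Delta$-transitivity do coincide \cite{wx2}. Once this autonomous equivalence is secured, Theorem~\ref{delta}(2) together with the commutativity hypothesis transports it back to the non-autonomous system, giving $(4)\Rightarrow(7)$ and closing the cycle. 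Everything else --- matching $f_{1,\infty}$ with $f$ for weak mixing, strong multi-transitivity, and $\Delta$-transitivity --- is bookkeeping via collective convergence, feeble openness and commutativity, so the crux is entirely at the autonomous level.
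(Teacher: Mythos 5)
Your proposal follows essentially the same route as the paper: the equivalence of (1)--(6) is obtained from Theorem~\ref{dengjia6}, and (7)$\Leftrightarrow$(3) is obtained by transferring $\Delta$-transitivity between $f_{1,\infty}$ and the limit map $f$ via Theorem~\ref{delta} and then invoking Moothathu's autonomous equivalence of weak mixing and $\Delta$-transitivity (the paper cites \cite[Corollary 7]{wx2} for exactly this step). Your observation that the reverse implication ``$f$ weakly mixing $\Rightarrow$ $f$ $\Delta$-transitive'' is not formal and rests on the minimality underlying Theorem~\ref{dengjia6} is well taken, since the paper's own one-line proof leans on the same citation without comment.
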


\begin{proof}

By Theorem \ref{dengjia6}, we have that (1)-(6) are equivalent. By \cite[Corollary 7]{wx2} and Theorem \ref{delta}, we obtain that (7) is equivalent to (3).
\end{proof}

\begin{theorem}\label{deltamixing}
Let $(X,f_{1,\infty})$ be an NDS. If $f_{1,\infty}$ is chain mixing and has the shadowing property, then $f_{1,\infty}$ is $\Delta$-mixing.
\end{theorem}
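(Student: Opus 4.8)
The plan is to verify condition (1) of Theorem \ref{dingli4} for every $m\in\mathbb{N}$ and every infinite $A\subseteq\mathbb{N}$. Since that condition is equivalent, by Theorem \ref{dingli4}, to condition (2)---the existence of a dense $G_\delta$ set along which the orbit $\{(f_1^{n}(x),f_1^{2n}(x),\ldots,f_1^{mn}(x))\mid n\in A\}$ is dense in $X^m$---establishing it for all $m$ and all infinite $A$ is exactly the definition of $\Delta$-mixing. Thus, fixing $m$, an infinite $A$, and non-empty open sets $U_0,U_1,\ldots,U_m\subseteq X$, I must produce some $n\in A$ together with a point $z$ satisfying $z\in U_0$ and $f_1^{in}(z)\in U_i$ for every $i\in\{1,\ldots,m\}$, which is the same as $z\in\bigcap_{i=0}^{m}f_1^{-in}(U_i)$.

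First I would choose points $p_i\in U_i$ and a radius $\varepsilon>0$ with $B(p_i,\varepsilon)\subseteq U_i$ for each $i\in\{0,1,\ldots,m\}$. The shadowing property then furnishes $\delta>0$ such that every $\delta$-chain of $f_{1,\infty}$ is $\varepsilon$-shadowed by a genuine orbit $\{f_1^{j}(z)\}$. Applying chain mixing to each consecutive pair $(p_{i-1},p_i)$ produces integers $N_i$ such that a $\delta$-chain from $p_{i-1}$ to $p_i$ exists for every length $\ell\geq N_i$; setting $N=\max_{1\le i\le m}N_i$ and using that $A$ is infinite, I pick $n\in A$ with $n\geq N$. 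Concatenating the resulting length-$n$ chains along $p_0\to p_1\to\cdots\to p_m$ yields a $\delta$-chain $x_0,x_1,\ldots,x_{mn}$ that passes through $p_i$ at time $in$, i.e. $x_{in}=p_i$. Shadowing this single chain gives $z$ with $d(f_1^{j}(z),x_j)<\varepsilon$ for all $0\le j\le mn$; in particular $z\in B(p_0,\varepsilon)\subseteq U_0$ and $f_1^{in}(z)\in B(p_i,\varepsilon)\subseteq U_i$, so $z\in\bigcap_{i=0}^{m}f_1^{-in}(U_i)$ with $n\in A$. This is precisely condition (1) of Theorem \ref{dingli4}, and the conclusion follows.

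The delicate point---and the step I expect to require the most care in the non-autonomous setting---is the concatenation that produces a single $\delta$-chain through $p_0,p_1,\ldots,p_m$. In the autonomous case the same map acts at every step, so joining $\delta$-chains end-to-end is automatic; here the segment travelling from $p_{i-1}$ to $p_i$ occupies the time window beginning at $(i-1)n$ and must therefore satisfy the chain inequality with respect to the maps $f_{(i-1)n+1},\ldots,f_{in}$ actually acting in that window, rather than with respect to $f_1,\ldots,f_n$. The connecting chains must consequently be supplied for the appropriate time-shifts of the system, so I would invoke chain mixing in the form that, for each relevant starting time, yields a $\delta$-chain of the prescribed length between the chosen points, ensuring the concatenated sequence is a bona fide $\delta$-chain of $f_{1,\infty}$ from time $0$. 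Once this time-indexing is handled, the remainder is the routine shadowing-and-triangle-inequality argument indicated above.
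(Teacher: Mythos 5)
Your proposal is correct and follows essentially the same route as the paper: fix $m$, the open sets $U_0,\ldots,U_m$ and an infinite $A$, use chain mixing to obtain $\delta$-chains of a common length $n\in A$ between consecutive target points, concatenate them into one $\delta$-chain, $\varepsilon$-shadow it, and conclude via condition (1) of Theorem \ref{dingli4}. The time-indexing subtlety you flag at the end is genuine --- under the paper's definition every $\delta$-chain is anchored at time $1$, yet the paper's own proof concatenates the chains without comment --- so your explicit insistence on supplying chains for the appropriate time-shifts is, if anything, more careful than the published argument.
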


\begin{proof}
Let $m\in\mathbb{N}$ and $U_0,U_1,...,U_m\subset X$ be non-empty open sets and $A\subset \mathbb{Z_+}$ be infinite. Then there exist $u_0,u_1,...,u_m\in X$ and $\varepsilon>0$ such that $B(u_i,\varepsilon)\subset U_i$ for each $i=0,2,...,m$. Let $\delta>0$ be provided to $\varepsilon$ such that each $\delta$-pseudo-orbit is $\varepsilon$-shadowed by some point in $X$. Since $f_{1,\infty}$ is chain mixing, there exists $l\in A$ such that there is a $\delta$-chain $\xi_i$ from $x_i$ to $x_{i+1}$ with length $l$ for each $i=0,2,...,m-1$. Then $\xi_0,\xi_1\setminus\{x_1\},\xi_2\setminus\{x_2\},...,\xi_{m-2}\setminus\{x_{m-2}\},\xi_{m-1}$ is a $\delta$-chain from $x_0$ to $x_m$. Hence, there exists a $z\in X$ such that for each $i=0,2,...,m$, we have $d(f_1^{il}(z),x_i)<\varepsilon$, which implies that
$$z\in\bigcap_{i=0}^mf_1^{-in}(U_i)\neq\emptyset.$$
Therefore, $f_{1,\infty}$ is $\Delta$-mixing.
\end{proof}
With the similar argument to the proof in Theorem \ref{deltamixing}, we can prove the next result.
\begin{theorem}
Let $(X,f_{1,\infty})$ be an NDS. If $f_{1,\infty}$ is chain transitive and has the shadowing property, then $f_{1,\infty}$ is transitive.
\end{theorem}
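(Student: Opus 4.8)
The plan is to verify the definition of transitivity directly, following the scheme of the proof of Theorem \ref{deltamixing} but in its simplest form: given arbitrary non-empty open sets $U,V\subseteq X$, I would produce an $n\in\mathbb{N}$ with $f_1^n(U)\cap V\neq\emptyset$. The key simplification over Theorem \ref{deltamixing} is that transitivity asks only for a single chain joining a point of $U$ to a point of $V$, so I need chain \emph{transitivity} (not chain mixing) and I need not control the length of the chain, nor concatenate $m$ chains.

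First I would fix $U,V$ and pick centers $u,v$ together with a radius $\varepsilon>0$ so that $B(u,\varepsilon)\subseteq U$ and $B(v,\varepsilon)\subseteq V$. Then I would invoke the shadowing property to obtain $\delta>0$ such that every $\delta$-chain of $f_{1,\infty}$ is $\varepsilon$-shadowed by a genuine orbit, i.e.\ for a chain $x_0,x_1,\ldots,x_n$ there exists $z\in X$ with $d(f_1^i(z),x_i)<\varepsilon$ for all $i\in\{0,1,\ldots,n\}$. Next, I would apply chain transitivity to the pair $(u,v)$ with this $\delta$: there is a $\delta$-chain $x_0=u,x_1,\ldots,x_n=v$ of some length $n\in\mathbb{N}$. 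Shadowing then supplies $z\in X$ with $d(f_1^0(z),u)<\varepsilon$ and $d(f_1^n(z),v)<\varepsilon$. The first inequality (since $f_1^0=id_X$) gives $z\in B(u,\varepsilon)\subseteq U$, and the second gives $f_1^n(z)\in B(v,\varepsilon)\subseteq V$. Hence $f_1^n(z)\in f_1^n(U)\cap V$, so $f_1^n(U)\cap V\neq\emptyset$ and $f_{1,\infty}$ is transitive.

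The only step that needs genuine care is the endpoint matching between the shadowing orbit and the chain: one must use the shadowing inequality at time $0$ to force $z\in U$ and at time $n$ to force $f_1^n(z)\in V$. This is precisely where dropping to a single chain pays off, since no intermediate constraints on the orbit are imposed (unlike the multiple time-scales $f_1^{il}$ appearing in Theorem \ref{deltamixing}), and consequently chain transitivity alone suffices and the length $n$ may be taken to be whatever the chain-transitivity hypothesis delivers. I do not anticipate any serious obstacle beyond bookkeeping the indices $f_1^i$ in the non-autonomous setting.
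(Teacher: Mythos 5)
Your argument is correct and is exactly the specialization of the proof of Theorem \ref{deltamixing} to a single $\delta$-chain, which is precisely what the paper intends when it says the result follows ``with the similar argument.'' Nothing further is needed.
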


\noindent\textbf{Question 1.}  In ADS, Moothathu \cite{wx2} has proved that in a minimal system, weakly mixing and multi-transitivity are equivalent. However, for an NDS, Das has proved that in a minimal system, weakly mixing does not imply multi-transitivity\cite{wx1}. Does the inverse hold? That is, in a minimal NDS, Does multi-transitivity imply weakly mixing?

\noindent\textbf{Question 2.}  In ADS, Moothathu \cite{wx2} has proved that  $\Delta$-transitivity implies  weakly mixing. Does the result still hold in NDS?  Further, does $\Delta$-transitivity imply weakly mixing of all orders in NDS?

\section*{Acknowledgments}
The author would like to thank the the editor and the anonymous referees for their constructive comments and valuable suggestions.

\end{document}